\documentclass[amstex,12pt, amssymb]{article}

\usepackage{mathtext}
\usepackage[cp1251]{inputenc}
\usepackage[T2A]{fontenc}
\usepackage[dvips]{graphicx}
\usepackage{amsmath}
\usepackage{amssymb}
\usepackage{amsxtra}
\usepackage{latexsym}
\usepackage{ifthen}

\newcounter{lemma}[section]

\newcounter{corollary}[section]

\newcounter{remark}[section]

\newcounter{theorem}[section]

\newcounter{proposition}[section]

\newcounter{example}

\numberwithin{equation}{section}

\begin{document}

\markboth{V. DESYATKA, E.~SEVOST'YANOV}{\centerline{ON CARATHEODORY
THEOREMS ...}}

\def\cc{\setcounter{equation}{0}
\setcounter{figure}{0}\setcounter{table}{0}}

\overfullrule=0pt


\author{ZARINA KOVBA, EVGENY SEVOST'YANOV}

\title{
{\bf ON CARATHEODORY PRIME ENDS EXTENSION FOR UNCLOSED
ORLICZ-SOBOLEV CLASSES}}

\date{\today}
\maketitle

\begin{abstract}
We study problems related to continuous boundary extension of
mappings of Orlicz-Sobolev classes in terms of prime ends. The
results we obtain concern the case when the mappings are open,
discrete, but not closed (not preserving the boundary of a domain).
These results generalize the well-known results of Caratheodory on
boundary extension of conformal mappings.
\end{abstract}

\bigskip
{\bf 2010 Mathematics Subject Classification: Primary 30C65;
Secondary 31A15, 31B25}

\section{Introduction}

This paper is devoted to the study of mappings with bounded and
finite distortion, see, e.g., \cite{Cr}, \cite{MRV}, \cite{MRSY},
\cite{PSS}, \cite{RV}, \cite{SalSt} and~\cite{Va}. The well-known
Caratheodory theorem states that conformal mappings extend to the
boundaries of their domains in terms of prime ends (see, e.g.,
\cite[Theorem~9.4]{CL}). There is also a similar result of Nakki on
quasiconformal mappings, the content of which we present below.

\medskip
{\bf Theorem.}{\it\, Under a quasiconformal mapping $f$ of a
collared domain $D_0$ onto a domain $D,$ there exists a one-to-one
correspondence between the boundary points of $D_0$ and the prime
ends of $D.$ Moreover, the cluster set $C(f, b),$ $b\in \partial
D_0,$ coincides with the impression $I(P)$ of the corresponding
prime end $P$ of $D$ (see~\cite[Theorem~4.1]{Na}).}

\medskip Let us note many other results made in this direction, see
e.g.~\cite{Ad, Car, Cr, KPR, KR}. The goal of the paper is to
transfer these and some other results to the Sobolev and
Orlicz-Sobolev classes, which are not homeomorphisms and do not
preserve the boundary, but are open and discrete. We should note
that for homeomorphisms of Orlicz-Sobolev classes, similar results
were obtained in~\cite{KR}. In the case when the mappings are open,
discrete, and closed the above was done in~\cite[Ch.~5]{Sev$_4$}.
The results obtained below are in many ways similar to~\cite{DS} and
\cite{DSH}. We should clarify that the modulus distortion estimates
at the boundary points {\bf are not established} for boundary
non-preserving mappings in $W^{1, \varphi}_{\rm loc}(D).$ If
$\int\limits_{1}^{\infty}\left(\frac{t}{\varphi(t)}\right)^
{\frac{1}{n-2}}\,dt<\infty$ (the Calderon condition), all such
estimates assume either that $f$ is a homeomorphism, or $f$ is open,
discrete and closed; see, e.g., \cite{Sev$_4$}. In the latter case,
the problem of a boundary extension of Orlicz-Sobolev mappings
directly reduces to modulus estimates. At the same time, by virtue
of the above, this problem does not have such a simple connection
for boundary non-preserving mappings. Our goal is to find it and to
study the boundary behavior of corresponding mappings.

\medskip
Recall some definitions. Let $U$ be an open set in ${\Bbb R}^n.$ In
what follows, $C^k_0(U)$ denotes the space of functions $u:U
\rightarrow {\Bbb R} $ with a compact support in $U,$ having $k$
partial derivatives with respect to any variable that are continuous
in $U.$ Let $U$ be an open set, $U\subset{\Bbb R}^n,$ and let
$u:U\rightarrow {\Bbb R}$ be some function, $u \in L_{\rm
loc}^{\,1}(U).$ Assume that, there exists a function $v\in L_{\rm
loc}^{\,1}(U)$ such that
$$\int\limits_U \frac{\partial \varphi}{\partial x_i}(x)u(x)\,dm(x)=
-\int\limits_U \varphi(x)v(x)\,dm(x)$$
for any function $\varphi\in C_1^{\,0}(U).$ Then we say that the
function $v$ is a weak derivative of the first order of~$u$ with
respect to $x_i$ and denoted by the $\frac{\partial u}{\partial
x_i}(x):= v.$ We write $u\in W_{\rm loc}^{1,1}(U)$ if $u$ has weak
derivatives of the first order with respect to each of the variables
in $U$ and $\frac{\partial u}{\partial x_i}$ are locally integrable
in~$U.$

\medskip
A mapping $f:U\rightarrow {\Bbb R}^n$ belongs to the Sobolev class
$W_{\rm loc}^{1,1}(U),$ we write $f \in W^{1,1}_{\rm loc}(U),$ if
all coordinate functions of $f=(f_1,\ldots, f_n)$ have weak partial
derivatives of the first order, which are locally integrable in $U.$
We write $f\in W^{1, p}_{\rm loc}(U),$ $p\geqslant 1,$ if all
coordinate functions $f_i,$ $1\leqslant i\leqslant n,$ of
$f=(f_1,\ldots, f_n)$ have weak partial derivatives of the first
order, which are locally integrable in $U$ to the degree $p.$

\medskip
Let $\varphi:{\Bbb R}^+\rightarrow{\Bbb R}^+$ be a measurable
function. The {\it Orlicz-Sobolev class} $W^{1,\varphi}_{\rm
loc}(D)$ is the class of all locally integrable mappings $f$ with
the first distributional derivatives whose gradient $\nabla f$
belongs locally in $D$ to the Orlicz class. By the definition,
$W^{1,\varphi}_{\rm loc}\subset W^{1,1}_{\rm loc}$. For the case
when $\varphi(t)=t^p$, $p\geqslant 1,$ we write as usual $f\in
W^{1,p}_{\rm loc}.$

\medskip
In more detail, we write $f\in W^{1,\varphi}_{\rm loc}(D),$ if
$f_i\in W^{1,1}_{\rm loc}(D)$ and
$$
\int\limits_{G}\varphi\left(|\nabla f(x)|\right)\,dm(x)<\infty
$$
for any domain $G\subset D$ with $\overline{G}\subset D,$ $|\nabla
f(x)|=\sqrt{\sum\limits_{i,j}\left(\frac{\partial f_i}{\partial
x_j}\right)^2}.$

\medskip
Recall that a mapping $f:D\rightarrow {\Bbb R}^n$ is called {\it
discrete} if the pre-image $\{f^{-1}\left(y\right)\}$ of each point
$y\,\in\,{\Bbb R}^n$ consists of isolated points, and {\it is open}
if the image of any open set $U\subset D$ is an open set in ${\Bbb
R}^n.$

\medskip
Given sets $E,$ $F\subset\overline{{\Bbb R}^n}$ and a domain
$D\subset {\Bbb R}^n$ we denote by $\Gamma(E,F,D)$ a family of all
paths $\gamma:[a,b]\rightarrow \overline{{\Bbb R}^n}$ such that
$\gamma(a)\in E,\gamma(b)\in\,F $ and $\gamma(t)\in D$ for $t \in
(a, b).$ Given a mapping $f:D\rightarrow {\Bbb R}^n$, we denote
\begin{equation}\label{eq1_A_4} C(f, x):=\{y\in \overline{{\Bbb
R}^n}:\exists\,x_k\in D: x_k\rightarrow x, f(x_k) \rightarrow y,
k\rightarrow\infty\}
\end{equation}
and
\begin{equation}\label{eq1_A_5} C(f, \partial
D)=\bigcup\limits_{x\in \partial D}C(f, x)\,.
\end{equation}
In what follows, ${\rm Int\,}A$ denotes the set of inner points of
the set $A\subset \overline{{\Bbb R}^n}.$ Recall that the set
$U\subset\overline{{\Bbb R}^n}$ is neighborhood of the point $z_0,$
if $z_0\in {\rm Int\,}A.$

\medskip
We say that a function ${\varphi}:D\rightarrow{\Bbb R}$ has a {\it
finite mean oscillation} at a point $x_0\in D,$ write $\varphi\in
FMO(x_0),$ if
\begin{equation}\label{eq29*!}
\limsup\limits_{\varepsilon\rightarrow
0}\frac{1}{\Omega_n\varepsilon^n}\int\limits_{B( x_0,\,\varepsilon)}
|{\varphi}(x)-\overline{{\varphi}}_{\varepsilon}|\ dm(x)<\infty\,,
\end{equation}
where
$$\overline{{\varphi}}_{\varepsilon}=\frac{1}
{\Omega_n\varepsilon^n}\int\limits_{B(x_0,\,\varepsilon)}
{\varphi}(x) dm(x)\,.$$
Let $Q:{\Bbb R}^n\rightarrow [0,\infty]$ be a Lebesgue measurable
function. We set
$$Q^{\,\prime}(x)=\left\{
\begin{array}{rr}
Q(x), &   Q(x)\geqslant 1\,, \\
1,  &  Q(x)<1\,.
\end{array}
\right.$$ Denote by $q^{\,\prime}_{x_0}$ the mean value of
$Q^{\,\prime}(x)$ over the sphere $|x-x_0|=r$, that means,
\begin{equation}\label{eq32*B}
q^{\,\prime}_{x_0}(r):=\frac{1}{\omega_{n-1}r^{n-1}}
\int\limits_{|x-x_0|=r}Q^{\,\prime}(x)\,d{\mathcal H}^{n-1}\,.
\end{equation}
Note that, using the inversion $\psi(x)=\frac{x}{|x|^2},$ we may
give the definition of $FMO$ as well as for $x_0=\infty.$ Similarly,
we may define~(\ref{eq32*B}) for $x_0=\infty.$

\medskip
A Borel function $\rho:{\Bbb R}^n\,\rightarrow [0,\infty] $ is
called {\it admissible} for the family $\Gamma$ of paths $\gamma$ in
${\Bbb R}^n,$ if the relation
\begin{equation*}\label{eq1.4}
\int\limits_{\gamma}\rho (x)\, |dx|\geqslant 1
\end{equation*}
holds for all (locally rectifiable) paths $ \gamma \in \Gamma.$ In
this case, we write: $\rho \in {\rm adm} \,\Gamma .$ Let $p\geqslant
1,$ then {\it $p$-modulus} of $\Gamma $ is defined by the equality
\begin{equation*}\label{eq1.3gl0}
M_p(\Gamma)=\inf\limits_{\rho \in \,{\rm adm}\,\Gamma}
\int\limits_{{\Bbb R}^n} \rho^p (x)\,dm(x)\,.
\end{equation*}

\medskip
We say that the boundary $\partial D$ of a~domain $D$ in ${\Bbb
R}^n,$ $n\geqslant 2,$ is {\it strongly accessible at a~point
$x_0\in
\partial D$ with respect to the $p$-modulus} if for each neighborhood
$U$ of $x_0$ there
exist a~compact set $E\subset D$, a~neighborhood $V\subset U$ of
$x_0$ and $\delta>0$ such that
\begin{equation}
\label{eq1.3_a} M_p(\Gamma(E,F, D))\geqslant \delta
\end{equation}
for each continuum $F$ in~$D$ that intersects $\partial U$ and
$\partial V$. When $p=n$, we will usually drop the prefix in the
``$p$-modulus'' when speaking about~(\ref{eq1.3_a}).

\medskip
Assume that, a mapping $f$ has partial derivatives almost everywhere
in $D.$ In this case, we set
\begin{gather}l\left(f^{\,\prime}(x)\right)\,=\,\min\limits_{h\in {\Bbb R}^n
\backslash \{0\}} \frac {|f^{\,\prime}(x)h|}{|h|}\,, \nonumber\\
\label{eq5_a} \Vert f^{\,\prime}(x)\Vert\,=\,\max\limits_{h\in {\Bbb
R}^n \backslash \{0\}} \frac {|f^{\,\prime}(x)h|}{|h|}\,,\\
J(x,f)=\det f^{\,\prime}(x)\,.\nonumber\end{gather}
and define for any $x\in D$ and $\alpha\geqslant 1$
\begin{equation}\label{eq0.1.1A}
K_{I, \alpha}(x,f)\quad =\quad\left\{
\begin{array}{rr}
\frac{|J(x,f)|}{{l\left(f^{\,\prime}(x)\right)}^{\,\alpha}}, & J(x,f)\ne 0,\\
1,  &  f^{\,\prime}(x)=0, \\
\infty, & {\rm otherwise}
\end{array}
\right.\,,\end{equation}
The next definitions due to Caratheodory~\cite{Car}; cf.~\cite{KR}.
Let $\omega$ be an open set in ${\Bbb R}^k$, $k=1,\ldots,n-1$. A
continuous mapping $\sigma\colon\omega\rightarrow{\Bbb R}^n$ is
called a {\it $k$-dimensional surface} in ${\Bbb R}^n$. A {\it
surface} is an arbitrary $(n-1)$-dimensional surface $\sigma$ in
${\Bbb R}^n.$ A surface $\sigma$ is called {\it a Jordan surface},
if $\sigma(x)\ne\sigma(y)$ for $x\ne y$. In the following, we use
$\sigma$ instead of $\sigma(\omega)\subset {\Bbb R}^n,$
$\overline{\sigma}$ instead of $\overline{\sigma(\omega)}$ and
$\partial\sigma$ instead of
$\overline{\sigma(\omega)}\setminus\sigma(\omega).$ A Jordan surface
$\sigma\colon\omega\rightarrow D$ is called a {\it cut} of $D$, if
$\sigma$ separates $D,$ that is $D\setminus \sigma$ has more than
one component, $\partial\sigma\cap D=\varnothing$ and
$\partial\sigma\cap\partial D\ne\varnothing$.

A sequence of cuts $\sigma_1,\sigma_2,\ldots,\sigma_m,\ldots$ in $D$
is called {\it a chain}, if:

(i) the set $\sigma_{m+1}$ is contained in exactly one component
$d_m$ of the set $D\setminus \sigma_m,$ wherein $\sigma_{m-1}\subset
D\setminus (\sigma_m\cup d_m)$; (ii)
$\bigcap\limits_{m=1}^{\infty}\,d_m=\varnothing.$

Two chains of cuts  $\{\sigma_m\}$ and $\{\sigma_k^{\,\prime}\}$ are
called {\it equivalent}, if for each $m=1,2,\ldots$ the domain $d_m$
contains all the domains $d_k^{\,\prime},$ except for a finite
number, and for each $k=1,2,\ldots$ the domain $d_k^{\,\prime}$ also
contains all domains $d_m,$ except for a finite number.

The {\it end} of the domain $D$ is the class of equivalent chains of
cuts in $D$. Let $K$ be the end of $D$ in ${\Bbb R}^n$, then the set
$I(K)=\bigcap\limits_{m=1}\limits^{\infty}\overline{d_m}$ is called
{\it the impression of} $K$. Following~\cite{Na}, we say that the
end $K$ is {\it a prime end}, if $K$ contains a chain of cuts
$\{\sigma_m\}$ such that
$$\lim\limits_{m\rightarrow\infty}M(\Gamma(C, \sigma_m, D))=0$$
for some continuum $C$ in $D.$ The following notation is used: the
set of prime ends corresponding to the domain $D$ is denoted by
$E_D,$ and the completion of the domain $D$ by its prime ends is
denoted by $\overline{D}_P.$

\medskip
Consider the following definition, which goes back to
N\"akki~\cite{Na}, cf.~\cite{KR}. The boundary of a domain $D$ in
${\Bbb R}^n$ is said to be {\it locally quasiconformal} if every
$x_0\in\partial D$ has a neighborhood $U$ that admits a
quasiconformal mapping $\varphi$ onto the unit ball ${\Bbb
B}^n\subset{\Bbb R}^n$ such that $\varphi(\partial D\cap U)$ is the
intersection of ${\Bbb B}^n$ and a coordinate hyperplane. The
sequence of cuts $\sigma_m,$ $m=1,2,\ldots ,$ is called {\it
regular,} if
$\overline{\sigma_m}\cap\overline{\sigma_{m+1}}=\varnothing$ for
$m\in {\Bbb N}$ and, in addition, $d(\sigma_{m})\rightarrow 0$ as
$m\rightarrow\infty.$ If the end $K$ contains at least one regular
chain, then $K$ will be called {\it regular}. We say that a bounded
domain $D$ in ${\Bbb R}^n$ is {\it regular}, if $D$ can be
quasiconformally mapped onto a domain with a locally quasiconformal
boundary whose closure is a compactum in ${\Bbb R}^n,$ and, besides
that, every prime end in $D$ is regular. Note that, the space
$\overline{D}_P=D\cup E_D$ is metric, which can be demonstrated as
follows. If $g:D_0\rightarrow D$ is a quasiconformal mapping of a
domain $D_0$ with a locally quasiconformal boundary onto some domain
$D,$ then for $x, y\in \overline{D}_P$ we put:
\begin{equation}\label{eq5M}
\rho(x, y):=|g^{\,-1}(x)-g^{\,-1}(y)|\,,
\end{equation}
where the element $g^{\,-1}(x),$ $x\in E_D,$ is to be understood as
some (single) boundary point of the domain $D_0.$ The specified
boundary point is unique and well-defined by~\cite[Theorem~2.1,
Remark~2.1]{IS}, cf.~\cite[Theorem~4.1]{Na}. It is easy to verify
that~$\rho$ in~(\ref{eq5M}) is a metric on $\overline{D}_P.$ If
$g_*$ is another quasiconformal mapping of a domain $D_*$ with
locally quasiconformal boundary onto $D$, then the corresponding
metric
$\rho_*(p_1,p_2)=|{\widetilde{g_*}}^{-1}(p_1)-{\widetilde{g_*}}^{-1}(p_2)|$
generates the same convergence and, consequently, the same topology
in $\overline {D}_P$ as $\rho_0$ because $g_0\circ g_*^{-1}$ is a
quasiconformal mapping of $D_*$ onto $D_0$, which extends, by
Theorem 4.1 in~\cite{Na}, to a homeomorphism between $\overline
{D_*}$ and $\overline {D_0}$. In the sequel, this topology in
$\overline {D}_P$ will be called the {\it topology of prime ends};
the continuity of mappings $F\colon \overline
{D}_P\rightarrow\overline{D^{\,\prime}}_P$ will be understood
relative to this topology.

\medskip
Some analogues of the following results were established for
homeomorphisms in~\cite[Theorems 3, 4]{KR}. For open discrete and
closed mappings, see, e.g., \cite[Ch.~5]{Sev$_4$}. For mappings with
upper modulus inequalities that are not closed, see, for example,
\cite{DS}.

\medskip
\begin{theorem}\label{th3_1}
{\it\, Let $n-1<\alpha\leqslant n,$ let $D$ and $D^{\,\prime}$ be
bounded domains in ${\Bbb R}^n,$ $n\geqslant 3,$ let $Q:D\rightarrow
[0, \infty]$ be a Lebesgue measurable function and let
$\varphi:[0,\infty)\rightarrow[0,\infty)$ be an increasing function.
Let $D$ be a regular domain and let $f:D\rightarrow D^{\,\prime}$ be
an open discrete mapping in $W^{1,\varphi}_{\rm loc}(D),$
$f(D)=D^{\,\prime}.$ In addition, assume that $C(f,
\partial D)\subset E_*$ for some closed (in the topology of ${\Bbb R}^n$) set $E_*\subset
\overline{D^{\,\prime}}$ and $f^{\,-1}(E_*)=E$ for some closed (in
the topology of $D$) subset $E\subset D.$ In addition, assume that:

\medskip
1) the set $E$ is nowhere dense in $D$ and $D$ is finitely connected
on $E\cup \partial D,$ i.e., for any $z_0\in E\cup \partial D$ and
any neighborhood $\widetilde{U}$ of $z_0$ there is a neighborhood
$\widetilde{V}\subset \widetilde{U}$ of $z_0$ such that $(D\cap
\widetilde{V})\setminus E$ consists of finite number of components,

\medskip
2) for any $P\in E_D:=\overline{D}_P\setminus D$ and for every
neighborhood $U$ of $P$ in $\overline{D}_P$ there is a neighborhood
$V\subset U$ in $\overline{D}_P$ of $P$ such that $V\cap D$ is
connected and $(V\cap D)\setminus E$ consists at most of $m$
components, $1\leqslant m<\infty,$

\medskip
3) all components of the set $D^{\,\prime}\setminus E_*$ have a
strongly accessible boundary with respect to $\alpha$-modulus,

\medskip
4) the function $\varphi$ satisfies the following Calderon condition
\begin{equation}\label{eq1_A_10}
\int\limits_{1}^{\infty}\left(\frac{t}{\varphi(t)}\right)^
{\frac{1}{n-2}}\,dt<\infty\,.
\end{equation}
5) Assume that $K_{I, \alpha}(x, f)\leqslant Q(x)$ a.e. and, in
addition, there exists $M>0$ and some increasing convex function
$\Phi:\overline{{\Bbb R^{+}}}\rightarrow \overline{{\Bbb R^{+}}}$
such that
\begin{equation}\label{eq1D}
\int\limits_{D}\Phi(Q(x))\cdot\frac{dm(x)}{(1+|x|^2)^n}\leqslant M
\end{equation}
while
\begin{equation}\label{eq2} \int\limits_{\delta_0}^{\infty}
\frac{d\tau}{\tau\left[\Phi^{-1}(\tau)\right]^{\frac{1}{\alpha-1}}}=
\infty
\end{equation}
for some $\delta_0>\tau_0:=\Phi(0).$ Then $f$ has a continuous
extension
$\overline{f}:\overline{D}_P\rightarrow\overline{D^{\,\prime}},$
moreover, $\overline{f}(\overline{D}_P)=\overline{D^{\,\prime}}.$ }
\end{theorem}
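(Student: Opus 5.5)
We argue by contradiction, reducing the problem to a modulus inequality for $f$ and then using the strong accessibility in 3).

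\emph{Step 1 (upper modulus inequality).} By the Calderon condition~(\ref{eq1_A_10}), an open discrete mapping $f\in W^{1,\varphi}_{\rm loc}(D)$ with $n\geqslant 3$ is differentiable a.e. and has the Lusin $N$-property on almost all spheres. So we may invoke the known result (see~\cite{Sev$_4$}, cf.~\cite{KR}) that $f$ satisfies, at every point $y_0$ (in particular, after the quasiconformal change of variables, at every prime end), the inequality
$$M_\alpha(f(\Gamma(S_1,S_2,A)))\leqslant\int\limits_{A}Q(x)\,\eta^{\alpha}(|x-y_0|)\,dm(x)$$
for annuli $A=A(y_0,r_1,r_2)$ and every $\eta$ with $\int_{r_1}^{r_2}\eta\,dr\geqslant 1$. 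Here we use $K_{I,\alpha}(x,f)\leqslant Q(x)$ and $n-1<\alpha\leqslant n$. By~(\ref{eq1D})--(\ref{eq2}) and the standard lemma relating these integral conditions to $\int_0^{\varepsilon_0}\frac{dr}{r\,q_{x_0}^{1/(\alpha-1)}(r)}=\infty$, we can choose $\eta$ so that the right-hand side tends to $0$ as $r_1\to0$. We transfer this from $D$ to the model domain $D_0$ with locally quasiconformal boundary, using the regular structure of $D$ and the metric~(\ref{eq5M}). Concretely, we work with $f\circ g$ on $D_0$; the distortion of $g$ only changes $Q$ by a bounded factor.

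\emph{Step 2 (contradiction setup).} Suppose $f$ has no limit at some $P\in E_D$. By compactness of $\overline{D^{\,\prime}}$, there are sequences $x_k,y_k\to P$ in the prime-end topology with $f(x_k)\to z_1$, $f(y_k)\to z_2$ and $z_1\ne z_2$. Since $C(f,\partial D)\subset E_*$, both $z_i\in E_*$. Because $E$ is closed and nowhere dense, we may perturb to get $x_k,y_k\notin E$ while keeping $|f(x_k)-f(y_k)|\geqslant\delta_1>0$.

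Condition 2) gives neighborhoods $V_k$ of $P$ with $V_k\cap D$ connected and $(V_k\cap D)\setminus E$ having at most $m$ components. After passing to a subsequence, infinitely many pairs lie in a fixed chain of components, or can be joined by paths $\gamma_k$ in $V_k\cap D$. Since $f^{-1}(E_*)=E$, the points of $f(\gamma_k)$ off $E_*$ lie in components of $D^{\,\prime}\setminus E_*$. Using the bound $m$ and the pigeonhole principle, we obtain a single component $G$ of $D^{\,\prime}\setminus E_*$ and a continuum $F_k\subset\overline{G}$ built from the images of subpaths. Each $F_k$ has diameter bounded below and accumulates at a point $z_0\in\partial G$. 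Finite connectedness on $E\cup\partial D$ (condition 1) is used to keep the connecting paths inside finitely many components near $E$.

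\emph{Step 3 (modulus clash).} By the strong accessibility of $\partial G$ (condition 3), applied with a fixed neighborhood $U$ of $z_0$, we find a compact $E'\subset G$ and $\delta>0$ with $M_\alpha(\Gamma(E',F_k,G))\geqslant\delta$ for all large $k$. Lift these paths via $f$. Maximal path liftings exist for open discrete maps, and the lifts starting on $\gamma_k$ cannot reach $\partial D$, since there $C(f,\partial D)\subset E_*$ while the paths stay in $G$, off $E_*$. Hence they end in the compact set $f^{-1}(E')\cap\overline{D_1}$ for a suitable subdomain $D_1$, which stays at a positive distance from $P$. Step 1 then gives $M_\alpha\to0$, a contradiction. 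Thus $\overline f$ exists, and its continuity on $\overline D_P$ follows by a diagonal argument.

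\emph{Step 4 (surjectivity).} For $y\in\partial D^{\,\prime}$, take $x_k\in D$ with $f(x_k)\to y$. Since $\overline D_P$ is compact (it is homeomorphic to $\overline{D_0}$), a subsequence converges. The limit cannot lie in $D$, because $f$ is open and $f(D)=D^{\,\prime}$ does not contain $y$. So the limit is some $P\in E_D$, and $\overline f(P)=y$.

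The main obstacle is Step 3, specifically the lifting. Because $f$ is not closed, lifts may terminate at interior points of $E$ rather than at the boundary. Handling this, and controlling the $m$ components in Step 2, is where conditions 1) and 2) must be used carefully.
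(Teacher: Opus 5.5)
Your skeleton matches the paper's: points $x_k,y_k\notin E$ tending to $P_0$, joining paths that meet $E$ in at most $m-1$ points, and a sub-arc whose image has diameter $\geqslant\delta/(m-1)$ and lies in one component $K$ of $D^{\,\prime}\setminus E_*$. These are followed, as in the paper, by strong accessibility, lifting and an upper modulus bound. The gap is that the upper bound you take from Step~1 is not available. You cite, as a known result, $M_\alpha(f(\Gamma(S_1,S_2,A)))\leqslant\int_A Q\,\eta^\alpha\,dm$ for $f$ on all of $D$ at boundary points. For open discrete maps in $W^{1,\varphi}_{\rm loc}$ under the Calderon condition, only the \emph{lower} $Q$-property of Lemma~\ref{thOS4.1} is known, with $Q=N(f,D)\,K_{I,\alpha}^{(p-n+1)/(n-1)}$. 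That property is vacuous unless $N(f,D)<\infty$, which is not assumed and can fail for non-closed maps. Passing from it to an upper bound for path families (Lemma~\ref{lem4A}) uses closedness of $f$ on the domain $G$ essentially. One needs $f(C_2)\cap\overline{f(B(x_0,r)\cap G)}=\varnothing$ and $\partial f(B(x_0,r)\cap G)\cap f(G)\subset f(S(x_0,r)\cap G)$ so that the image spheres separate; only then do the Ziemer and Hesse--Shlyk capacity--modulus identities apply. The paper's introduction stresses that such estimates are not established for boundary non-preserving maps. So ``Step~1 then gives $M_\alpha\to 0$'' in your Step~3 is unsupported, and this, not the lifting, is the real obstacle.

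The missing idea is to manufacture closedness by localizing to the components of $D\setminus E$.
(i) By 1) and compactness of $\overline D$, the set $D\setminus E$ has finitely many components $D_1,\dots,D_s$.
(ii) For each $i$, $C(f,\partial D_i)\subset E_*$: at points of $\partial D$ by hypothesis, and at points of $\partial D_i\cap D\subset E$ because $f(E)\subset E_*$. Also $f(D_i)\cap E_*=\varnothing$, since $f^{\,-1}(E_*)=E$. Hence $f|_{D_i}$ is boundary preserving, so closed (Vuorinen), and path lifting shows $f(D_i)$ is an entire component of $D^{\,\prime}\setminus E_*$.
(iii) Several things follow from (ii). First, $D^{\,\prime}\setminus E_*$ has finitely many components; your pigeonhole step tacitly needs this, so that the strong-accessibility data ($U$, $V$, the compact set, the lower bound) do not depend on $k$. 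Second, $N(f,D_1)<\infty$ and $C_2=f^{\,-1}(C_0^{\,\prime})\cap D_1$ is a fixed compactum in $D_1$. Third, the lifts are automatically complete.
Lemmas~\ref{thOS4.1} and~\ref{lem4A} applied to $f|_{D_1}$ then give $M_\alpha\leqslant N(f,D_1)^{\frac{n-1}{p-n+1}}\int_{A(x_0,r_k,\varepsilon_1)}Q\,\eta^\alpha\,dm$. This tends to $0$ by (\ref{eq6}), which follows from (\ref{eq1D})--(\ref{eq2}) via Lemma~\ref{lem1}.

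Separately, your passage to $D_0$ through $f\circ g$ is not harmless. For $\alpha<n$, $K_{I,\alpha}(x,f\circ g)$ picks up the factor $|J(x,g)|/l(g^{\,\prime}(x))^{\alpha}=K_{I,n}(x,g)\,l(g^{\,\prime}(x))^{n-\alpha}$, which need not be bounded. Neither (\ref{eq1D}) nor membership in $W^{1,\varphi}_{\rm loc}$ is preserved either. The paper avoids this by staying in $D$ and using chains of cuts of $P_0$ lying on spheres $S(x_0,r_k)$, $x_0\in\partial D$, which exist because $D$ is regular.
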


\medskip
\begin{theorem}\label{th3}
{\it\, The conclusion of Theorem~\ref{th3_1} remains true, if, under
the conditions of this theorem, we replace~5) by one of the
following conditions:

\medskip
5.1) the function $Q$ has a finite mean oscillation at every point
$b\in \partial D;$

\medskip
5.2) $q_{b}(r)\,=\,O\left(\left[\log{\frac1r}\right]^{n-1}\right)$
as $r\rightarrow 0$ for every point $b\in \partial D;$

\medskip 5.3) the condition
\begin{equation}\label{eq6}
\int\limits_{0}^{\delta(b)}\frac{dt}{t^{\frac{n-1}{\alpha-1}}
q_{b}^{\,\prime\,\frac{1}{\alpha-1}}(t)}=\infty
\end{equation}
holds for every point $b\in \partial D$ and some $\delta(b)>0.$ Then
$f$ has a continuous extension
$\overline{f}:\overline{D}_P\rightarrow\overline{D^{\,\prime}},$
moreover, $\overline{f}(\overline{D}_P)=\overline{D^{\,\prime}}.$ }
\end{theorem}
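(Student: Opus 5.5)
The plan is to reduce Theorem~\ref{th3} to the proof of Theorem~\ref{th3_1}. In that proof, condition~5) is used at exactly one point: to guarantee that $f$ satisfies, at each boundary point $b\in\partial D$, an inverse (lower) Poletsky-type modulus inequality with respect to the $\alpha$-modulus, namely
$$
M_\alpha(\Gamma(f(C_1),f(C_2),D^{\,\prime}))\leqslant\int\limits_{A(b,r_1,r_2)}Q(x)\,\eta^{\alpha}(|x-b|)\,dm(x)\,,
$$
or the corresponding inequality for path liftings. Here $\eta$ is any nonnegative measurable function with $\int_{r_1}^{r_2}\eta(r)\,dr\geqslant 1$. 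The second ingredient is that the right-hand side can be made arbitrarily small as $r_1\to 0$. Hypotheses 1)--4) and the inclusion $K_{I,\alpha}(x,f)\leqslant Q(x)$ (which we keep under 5.1)--5.3)) yield the modulus inequality itself. This uses the Calderon condition~(\ref{eq1_A_10}), which gives differentiability a.e., the $N$-property on almost all spheres, and absolute continuity on almost all paths for $W^{1,\varphi}_{\rm loc}$ with $n\geqslant3$. Consequently only the smallness step needs to be redone under each of 5.1)--5.3).

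\textbf{Main step.} I will show that each of 5.1)--5.3) implies the divergence condition
$$
\int\limits_0^{\delta(b)}\frac{dt}{t^{\frac{n-1}{\alpha-1}}q_b^{\,\prime\,\frac{1}{\alpha-1}}(t)}=\infty .
$$
Under this condition one chooses
$$
\eta(t)=\frac{1}{I(r_1,r_2)\,t^{\frac{n-1}{\alpha-1}}q_b^{\,\prime\,\frac1{\alpha-1}}(t)},\qquad I(r_1,r_2)=\int\limits_{r_1}^{r_2}\frac{dt}{t^{\frac{n-1}{\alpha-1}}q_b^{\,\prime\,\frac{1}{\alpha-1}}(t)} .
$$
By Fubini's theorem in spherical coordinates, the right-hand side equals $\omega_{n-1}/I(r_1,r_2)^{\alpha-1}$, and this tends to $0$ as $r_1\to0$. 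Replacing $Q$ by $Q^{\,\prime}\geqslant Q$ keeps the inequality valid. Case 5.3) is exactly this condition.

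\textbf{Cases 5.1) and 5.2).} For 5.2), the bound $q_b(r)\leqslant C\log^{n-1}(1/r)$ gives an integrand of order
$$
t^{-\frac{n-1}{\alpha-1}}\log^{-\frac{n-1}{\alpha-1}}(1/t),
$$
and since $\frac{n-1}{\alpha-1}\geqslant1$ for $\alpha\leqslant n$, the integral diverges. For 5.1), I would not pass through $q_b$ directly. Instead I would use the standard FMO lemma: for $Q\in FMO(b)$ and $n\geqslant2$,
$$
\int\limits_{\varepsilon<|x-b|<\varepsilon_0}\frac{Q(x)\,dm(x)}{\left(|x-b|\log\frac1{|x-b|}\right)^{n}}=O\left(\log\log\frac1\varepsilon\right).
$$
I then take $\eta(t)=\frac{1}{t\log(1/t)\,\log\frac{\log(1/r_1)}{\log(1/r_2)}}$, which is admissible.

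\textbf{Main obstacle.} This is the mismatch between the exponent $\alpha$ and $n$ in case 5.1). With $\eta^\alpha$ in place of $\eta^n$ and $\alpha<n$, the weight $1/(t\log(1/t))^\alpha$ is larger than $1/(t\log(1/t))^n$, so the FMO lemma does not apply verbatim. I would first try to handle this by Hölder's inequality over the annulus. Failing that, I would use the FMO-type estimate adapted to exponent $\alpha$, with $\eta(t)=1/\bigl(t^{\frac{n-1}{\alpha-1}}\log\frac1t\bigr)$ suitably normalized. I would then check that the normalizing integral grows faster than the FMO bound on the $Q$-integral, so that the quotient still tends to $0$.

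Once the smallness is established in all three cases, the rest of the argument of Theorem~\ref{th3_1} goes through verbatim. That argument consists of the contradiction with the strong accessibility of the components of $D^{\,\prime}\setminus E_*$ with respect to the $\alpha$-modulus, the use of at most $m$ components in hypothesis 2), and the surjectivity $\overline f(\overline D_P)=\overline{D^{\,\prime}}$. This gives the conclusion.
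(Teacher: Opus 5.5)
Your reduction of 5.1) and 5.2) to the divergence condition (\ref{eq6}) matches the first paragraph of the paper's proof, but that is the easy part. Everything else you defer to ``the argument of Theorem~\ref{th3_1}'', and in this paper no independent argument exists. Theorem~\ref{th3_1} is proved by showing, via Lemma~\ref{lem1} and Remark~\ref{rem1}, that (\ref{eq1D})--(\ref{eq2}) imply (\ref{eq6}), and then invoking Theorem~\ref{th3}. The dependence runs the other way, so your proposal is circular and omits the substance of the statement.

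The mechanism you describe for the key modulus inequality would also fail as stated. The Calderon condition together with $K_{I,\alpha}\leqslant Q$ gives only the lower $Q$-property of Lemma~\ref{thOS4.1}, and even that requires $N(f,G)<\infty$. Turning it into an upper bound for the image path family (Lemma~\ref{lem4A}) uses that $f$ is closed in $G$. This enters through $\partial f(B(x_0,r)\cap G)\cap f(G)\subset f(S(x_0,r)\cap G)$ and the Ziemer--Hesse--Shlyk duality. For a boundary non-preserving $f$ on $D$ neither closedness nor finite multiplicity is available; this is exactly the difficulty the introduction says is open. Your generic inequality with $\Gamma(f(C_1),f(C_2),D^{\,\prime})$ is not what one gets: Lemma~\ref{lem4A} works inside $f(G)$ and needs the full $G$-preimage of $f(C_2)$ to stay away from $x_0$.

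The missing idea is to localize to components.
\begin{itemize}
\item Using 1) and 2), show that $D\setminus E$ has finitely many components $D_1,\dots,D_s$.
\item From $C(f,\partial D)\subset E_*$ and $f^{-1}(E_*)=E$, get $C(f,\partial D_i)\subset E_*$. Hence $f|_{D_i}$ is boundary preserving, so it is closed \cite[Theorem~3.3]{Vu} with finite multiplicity.
\item By maximal liftings (Proposition~\ref{pr3}), $f(D_i)$ is an entire component $K$ of $D^{\,\prime}\setminus E_*$; consequently $D^{\,\prime}\setminus E_*$ also has finitely many components.
\end{itemize}
The contradiction argument must then run inside one pair $(D_1,K)$.
\begin{itemize}
\item Take $x_k,y_k\notin E$ converging to $P_0$ with $|f(x_k)-f(y_k)|\geqslant\delta$.
\item Join them by paths $\gamma_k$ in shrinking neighbourhoods of $P_0$ that meet $E$ in at most $m-1$ points.
\item Extract a subarc of $f(\gamma_k)$ that avoids $E_*$, has diameter $\geqslant\delta/(m-1)$, lies in a fixed $K$, and starts near a point $w_0\in E_*\cap\partial K$.
\item Apply strong accessibility of $K$ at $w_0$, and lift the joining paths into $D_1$; the liftings are complete because $f|_{D_1}$ is closed.
\item Only then apply Lemma~\ref{lem4A} with $C_2=f^{-1}(C_0^{\,\prime})\cap D_1$, which is compact precisely because of closedness.
\end{itemize}
None of this appears in your proposal beyond a one-line mention.

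Finally, the ``main obstacle'' you identify in 5.1) is illusory, because your comparison of weights is reversed. Since $t\log\frac1t\leqslant 1/e<1$ on $(0,1)$, one has $(t\log\frac1t)^{-\alpha}\leqslant(t\log\frac1t)^{-n}$ for $\alpha\leqslant n$. So the FMO estimate, applied to $Q^{\,\prime}$ (again FMO at $b$), works verbatim and gives $\int Q^{\,\prime}\eta^\alpha\,dm=O(\log\log\frac1\varepsilon)/L^{\alpha}\to0$ with $L\sim\log\log\frac1\varepsilon$, since $\alpha>1$. By Proposition~\ref{pr1A} this yields (\ref{eq6}).
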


\medskip
For Sobolev classes on the plane, Theorems~\ref{th3_1}--\ref{th3}
have a somewhat simpler form.

\medskip
\begin{theorem}\label{th1_1}
{\it\, Let $1<\alpha\leqslant 2,$ let $D$ and $D^{\,\prime}$ be
bounded domains in ${\Bbb R}^2,$ let $Q:D\rightarrow [0, \infty]$ be
a Lebesgue measurable function. Assume that, $D$ be a regular
domain. Let $f:D\rightarrow D^{\,\prime}$ be a bounded open discrete
mapping of the class $W^{1, 1}_{\rm loc}(D),$ $f(D)=D^{\,\prime}.$
In addition, assume that $C(f,
\partial D)\subset E_*$ for some closed (in the topology of ${\Bbb R}^n$) set $E_*\subset
\overline{D^{\,\prime}}$ and $f^{\,-1}(E_*)=E$ for some closed (in
the topology of $D$) subset $E\subset \overline{D}.$ In addition,
assume that the conditions 1)--3) of Theorem~\ref{th3_1} holds.
Assume also that $K_{I, \alpha}(x, f)\leqslant Q(x)$ a.e. and the
assumption~5) in Theorem~\ref{th3_1} is satisfied for $n=2.$ Then
$f$ has a continuous extension
$\overline{f}:\overline{D}_P\rightarrow\overline{D^{\,\prime}},$
moreover, $\overline{f}(\overline{D}_P)=\overline{D^{\,\prime}}.$ }
\end{theorem}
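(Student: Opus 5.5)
The plan is to reduce Theorem~\ref{th1_1} to the same scheme used for Theorems~\ref{th3_1}--\ref{th3}. The one ingredient that changes is the source of the lower modulus inequality. In the plane, an open discrete mapping $f\in W^{1,1}_{\rm loc}(D)$ with $K_{I,\alpha}(x,f)\leqslant Q(x)\in L^1_{\rm loc}$ (local integrability follows from~(\ref{eq1D}) via convexity of $\Phi$) has finite distortion. It is also differentiable a.e., satisfies Lusin's $N$ and $N^{-1}$ properties on almost all curves, and is absolutely continuous on almost all paths (the Gehring--Lehto/Menchoff-type facts available for $n=2$). This is why the Calderon condition 4) can be dropped when $n=2$.

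\textbf{Step 1.} With these facts I would establish the inverse-type Poletsky inequality
$$M_\alpha(\Gamma)\leqslant \int_{D} K_{I,\alpha}(x,f)\,\rho^\alpha(x)\,dm(x)\leqslant\int_D Q(x)\rho^\alpha(x)\,dm(x)$$
for every family $\Gamma$ of paths in $D$ whose images $f(\Gamma)$ lie in a fixed family $\Gamma'$ and every $\rho\in{\rm adm}\,\Gamma'$. I would do this by lifting paths of $f(D)$ (maximal $f$-liftings, which exist for open discrete maps) and using the length/Jacobian change of variables on almost all lifted paths. This is the planar counterpart of the lemma used in the proof of Theorem~\ref{th3_1}.

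\textbf{Step 2.} Under assumption~5) with $n=2$, exactly as in Theorem~\ref{th3_1}, the integral condition~(\ref{eq2}) yields a function $\eta$ on annuli around each $b\in\partial D$ with $\int\eta\geqslant 1$ and $\int Q\eta^\alpha\to 0$. It suffices to establish this on $D_0$, the domain with locally quasiconformal boundary; transferring via the quasiconformal map $g$ distorts moduli only boundedly, which matches the metric~(\ref{eq5M}).

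\textbf{Step 3.} Argue by contradiction. Suppose some prime end $P$ has two distinct limit values $y_1\ne y_2$ along sequences $x_k,x_k'\to P$. By conditions 1)--2), take neighborhoods $V_k$ of $P$ with $V_k\cap D$ connected and $(V_k\cap D)\setminus E$ having at most $m$ components. Pass to subsequences so that $x_k,x_k'$ lie, up to small perturbation (using that $E$ is nowhere dense), in the same component, or are joined through finitely many components. Join them by paths $\gamma_k\subset V_k\cap D$. The images $f(\gamma_k)$ are continua of diameter bounded below. They lie in a single component of $D'\setminus E_*$ near the relevant boundary, because $f^{-1}(E_*)=E$ and $C(f,\partial D)\subset E_*$ force the limits into $\partial$ of a component. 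Strong accessibility 3) then gives $M_\alpha(\Gamma(F,f(\gamma_k),\cdot))\geqslant\delta>0$ for a fixed compact $F$. Lifting these path families back to $D$ starting from $f^{-1}(F)$, which is compactly contained away from $P$, the inequality of Step~1 together with the choice of $\eta$ from Step~2 makes the modulus tend to $0$. This contradiction proves the extension.

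\textbf{Step 4.} Surjectivity $\overline f(\overline D_P)=\overline{D'}$ follows from compactness of $\overline D_P$ and $f(D)=D'$. Any $y\in\partial D'$ is a limit of $f(x_k)$; a subsequence of $x_k$ converges in $\overline D_P$, and since $f$ is proper on compacta of $D$, the limit lies in $E_D$, so $y=\overline f(P)$.

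I expect the main obstacle to be Step~3: controlling the ``non-closed'' behavior, i.e.\ ensuring that the lifted paths actually terminate near $P$ rather than escaping through $E$. I would handle this with the finite-component condition 2), choosing among at most $m$ components one containing infinitely many $x_k,x_k'$ and using maximal liftings whose ends either reach $f^{-1}(F)$ or accumulate at $\partial D$.
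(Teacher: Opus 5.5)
The gap is in Step~1, and it breaks Step~3. Strong accessibility bounds from below the $\alpha$-modulus of a family of \emph{image} paths, $\Gamma(\widetilde G_k,C_0^{\,\prime},K)$. The contradiction therefore needs an \emph{upper} bound for the modulus of image paths by $\int Q\,\eta^\alpha(|x-x_0|)\,dm$, where $\eta(|x-x_0|)$ is admissible for the lifted paths in $D$.

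Your inequality runs the other way. It bounds the modulus of the source family $\Gamma$ by a $\rho$ admissible for $\Gamma^{\,\prime}$, so it says nothing about $M_\alpha(\Gamma^{\,\prime})$, and $\eta(|x-x_0|)$ is not admissible for $\Gamma^{\,\prime}$. As literally written it is even false: for a translation with $D\cap D^{\,\prime}=\varnothing$, take $\rho$ vanishing off $D^{\,\prime}$.

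Perhaps you meant the direct inequality $M_\alpha(f(\Gamma))\leqslant\int_D K_{I,\alpha}(x,f)\rho^\alpha(x)\,dm(x)$, $\rho\in{\rm adm}\,\Gamma$, which is what a Poletsky-type lifting argument would aim at. Then you are postulating exactly the kind of boundary modulus estimate that the paper points out is not established for boundary non-preserving mappings. Its key ingredient, absolute continuity of the $f$-liftings along $M_\alpha$-almost every image path, is only asserted in your sketch.

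The paper never uses a path inequality for $f$. It restricts to a component $D_1$ of $D\setminus E$ on which $f$ is closed, hence of finite multiplicity. There Lemma~\ref{lem3} gives only the lower \emph{surface} inequality for $f(S(x_0,r)\cap D_1)$, with $Q=N(f,D_1)K_{I,\alpha}^{p-1}$ and $\alpha=p/(p-1)$. By closedness, the sets $\partial f(B(x_0,r)\cap D_1)\cap f(D_1)$, $r\in(r_k,\varepsilon_1)$, lie in $f(S(x_0,r)\cap D_1)$ and separate $f(\sigma_k\cap D_1)$ from $f(C_2)$ in $f(D_1)$. The Ziemer and Hesse--Shlyk equalities then turn the lower bound for these separating sets into
\[
M_\alpha\bigl(f(\Gamma(C_1,C_2,D_1))\bigr)\leqslant\int_{A(x_0,r_k,\varepsilon_1)}Q^{\frac{1}{p-1}}(x)\,\eta^\alpha(|x-x_0|)\,dm(x)
\]
for $C_1\subset d_k\cap D_1$ and $C_2=f^{-1}(C_0^{\,\prime})\cap D_1$ (Lemma~\ref{lem4A}). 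This works only for radial $\eta$ and needs cuts $\sigma_k\subset S(x_0,r_k)$ with $x_0\in\partial D$.

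This is also why the issue you defer to your last paragraph is not a technicality. The paper first proves three things:
\begin{itemize}
\item $D\setminus E$ has finitely many components $D_i$;
\item $C(f,\partial D_i)\subset C(f,\partial D)\cup f(E)\subset E_*$ while $f(D_i)\cap E_*=\varnothing$, so $f|_{D_i}$ is boundary preserving and therefore closed;
\item $f(D_i)$ is a whole component $K$ of $D^{\,\prime}\setminus E_*$.
\end{itemize}
Only because of this are the maximal liftings of paths of $K$ starting on $\gamma_k|_{[p_{s(k)},q_{s(k)}]}\subset D_1\cap d_k$ complete, ending in the compactum $f^{-1}(C_0^{\,\prime})\cap D_1$. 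The alternative ``the liftings accumulate at the boundary'', which you leave open, is ruled out exactly here; without that, $\eta(|x-x_0|)$ need not be admissible for the lifted family.

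The liftings must also start on $\gamma_k$. Lifting from $f^{-1}(F)$, as in your Step~3, ends in $f^{-1}(f(\gamma_k))$, which need not be near $P$.

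Finally, do not pass to $D_0$ in Step~2. For $\alpha<2$ the $\alpha$-modulus is not quasi-invariant under quasiconformal maps, and the hypothesis on $Q$ is stated at points of $\partial D$. The paper stays in $D$ and uses the chain of cuts on spheres centred at $x_0\in\partial D$ supplied by the regularity of $D$.
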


\medskip
\begin{theorem}\label{th1}
{\it\, The conclusion of Theorem~\ref{th1_1} remains true, if, under
the conditions of this theorem, we replace~5) by one of the
following conditions~5.1)--5.3) listed above in Theorem~\ref{th3}.}
\end{theorem}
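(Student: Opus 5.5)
Theorem~\ref{th1} follows from Theorem~\ref{th1_1} in the same way that Theorem~\ref{th3} follows from Theorem~\ref{th3_1}. Condition~5) enters the proof of Theorem~\ref{th1_1} in only one place: it guarantees that $f$ satisfies the lower (or inverse Poletsky-type) modulus inequality with respect to $\alpha$-modulus at every boundary point $b\in\partial D$. Concretely, for $0<r_1<r_2<d_0$ one needs an estimate of the form
\begin{equation*}
M_\alpha(f(\Gamma(S(b,r_1),S(b,r_2),D)))\leqslant \int\limits_{A(b,r_1,r_2)} Q(x)\,\eta^{\alpha}(|x-b|)\,dm(x)
\end{equation*}
for admissible $\eta$. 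From it one derives that the right-hand side can be made arbitrarily small by choosing $r_1\to0$. This smallness is then used, together with conditions 1)--3), the regularity of $D$ and the prime end structure, to exclude two distinct limit points of $f$ along a prime end. Hence my plan is to re-run the proof of Theorem~\ref{th1_1} verbatim, replacing only the lemma asserting the smallness under 5) by the corresponding statements under 5.1)--5.3).

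First I will verify that in the planar case the modulus inequality itself holds for $f\in W^{1,1}_{\rm loc}$ open discrete with $K_{I,\alpha}(x,f)\leqslant Q(x)$, independently of 5). This part of the proof of Theorem~\ref{th1_1} does not use integrability of $\Phi(Q)$, only local integrability of $Q$ when needed. Second, I reduce everything to a single divergence condition: it suffices that
\begin{equation*}
\int\limits_{\varepsilon}^{\varepsilon_0}\frac{dt}{t^{\frac{n-1}{\alpha-1}}q_b^{\,\prime\,\frac{1}{\alpha-1}}(t)}\rightarrow\infty,\qquad \varepsilon\rightarrow 0,
\end{equation*}
with $n=2$. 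In that case the standard choice
\begin{equation*}
\eta(t)=\frac{1}{I\,t^{\frac{n-1}{\alpha-1}}q_b^{\,\prime\,\frac{1}{\alpha-1}}(t)}
\end{equation*}
on $(\varepsilon,\varepsilon_0)$, where $I$ is the integral above, gives via Fubini in polar coordinates the bound $\omega_{n-1}/I^{\alpha-1}\rightarrow 0$. This settles case 5.3) directly.

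For 5.2), I compute that $q_b(r)=O([\log\frac1r]^{n-1})$ makes the integrand comparable to $1/(t\log\frac1t)$ when $\alpha=n$, which has a divergent integral. For $\alpha<n$, a bound on $q_b$ of order $\log$ makes the integrand even larger near $0$, because $t^{-(n-1)/(\alpha-1)}$ dominates, so divergence persists and 5.2) reduces to 5.3). For 5.1) the divergence condition need not hold, so instead I use the known FMO estimate: for $\psi(t)=1/(t\log\frac1t)$ one has $\int_{\varepsilon<|x-b|<\varepsilon_0}Q(x)\psi^{\alpha}(|x-b|)\,dm(x)=O(\log\log\frac1\varepsilon)$ for $\alpha=n$ (with an analogous, even better, estimate for $\alpha<n$ after comparing powers). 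Meanwhile $\int_\varepsilon^{\varepsilon_0}\psi\,dt\sim\log\log\frac1\varepsilon$. Normalizing $\eta=\psi/\int\psi$ then yields a bound of order $(\log\log\frac1\varepsilon)^{1-\alpha}\rightarrow0$.

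The main obstacle is 5.1) when $\alpha<n$. The FMO lemma is formulated for the $n$-th power, so I will have to check that $\int Q\,\psi^\alpha$ grows no faster than a power of $\int\psi$ strictly below $\alpha$. I would try to use the weight $\psi(t)=t^{-(n-1)/(\alpha-1)}\cdot(\text{log factor})$ adapted to $\alpha$ together with the local integrability of $Q$ provided by FMO. Failing that, I would choose $\psi$ with a large power of $\log$ to absorb the discrepancy. Once the smallness is established in each case, the rest of the proof of Theorem~\ref{th1_1} applies unchanged, giving the continuous extension and $\overline{f}(\overline{D}_P)=\overline{D^{\,\prime}}$.
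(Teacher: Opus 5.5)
Your treatment of 5.2) and 5.3) is fine and agrees with the paper: you show divergence of (\ref{eq6}), then choose the extremal $\eta$ to get the bound $\omega_{n-1}/I^{\alpha-1}\to 0$. The part that actually proves the theorem, however, is missing. You propose to re-run ``the proof of Theorem~\ref{th1_1}'' with only the smallness lemma changed. In the paper, Theorem~\ref{th1_1} is obtained the other way round: Lemma~\ref{lem1} shows that 5) implies (\ref{eq6}), and then Theorem~\ref{th1} itself is invoked. So there is no independent argument to re-run. The whole content of the statement is the argument of Theorem~\ref{th3}, repeated with Lemma~\ref{lem3} in place of Lemma~\ref{thOS4.1}.

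Moreover, the mechanism you attribute to that argument is not available. You plan to verify, independently of 5), an inequality $M_\alpha(f(\Gamma(S(b,r_1),S(b,r_2),D)))\leqslant\int_{A(b,r_1,r_2)}Q(x)\,\eta^\alpha(|x-b|)\,dm(x)$ for $f$ on all of $D$. For a mapping that does not preserve the boundary, this is precisely what the paper stresses is not established. Lemma~\ref{lem3} requires $N(f,D)<\infty$, which is not assumed. Converting the lower $Q$-property into an upper bound for a path family, as in Lemma~\ref{lem4A}, uses closedness of $f$ on $G$. Closedness is what gives $\partial f(B(x_0,r)\cap G)\cap f(G)\subset f(S(x_0,r)\cap G)$ and keeps $f(C_2)$ disjoint from $\overline{f(B(x_0,r)\cap G)}$.

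The missing idea is to localize to regions where $f$ is closed, and this is exactly where 1)--3) enter; in your outline they appear only as a black box. The steps are as follows.
\begin{itemize}
\item Using 1) and 2), one shows that $D\setminus E$ has finitely many components $D_i$.
\item From $C(f,\partial D)\subset E_*$ and $f^{\,-1}(E_*)=E$, one gets $C(f,\partial D_i)\subset\partial f(D_i)$. Hence $f|_{D_i}$ is closed, so it has finite multiplicity, and $f(D_i)$ is a component of $D^{\,\prime}\setminus E_*$.
\item Assume two different limits along $P_0$. Join $x_k,y_k\notin E$ by paths $\gamma_k\subset d_k$ meeting $E$ at most $m-1$ times. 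Then some subarc of $f(\gamma_k)$ avoids $E_*$, has endpoints at distance $\geqslant\delta/(m-1)$, and, after passing to a subsequence, lies in a fixed component $K$.
\item Strong accessibility 3) at a limit point $w_0\in E_*\cap\partial K$ gives $M_\alpha(\Gamma(\widetilde{G}_k,C_0^{\,\prime},K))\geqslant P>0$.
\item For the upper bound, lift this family into $D_1$; the liftings are complete because $f|_{D_1}$ is closed. Apply Lemma~\ref{lem3} on $D_1$, where $N(f,D_1)<\infty$, and Lemma~\ref{lem4A} with $G=D_1$ and $C_2=f^{\,-1}(C_0^{\,\prime})\cap D_1$. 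Together with (\ref{eq6}), this gives a bound tending to $0$, a contradiction.
\end{itemize}

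Finally, your remark that under 5.1) the divergence condition ``need not hold'' is incorrect. By Proposition~\ref{pr1A}, $\omega_{n-1}/I^{\alpha-1}\leqslant\int_A Q^{\,\prime}(x)\eta^\alpha(|x-b|)\,dm(x)$ for every normalized $\eta$. So any $\eta$ making the right-hand side small forces $I\to\infty$, which is why the paper simply notes that 5.1) implies 5.3). Your unresolved case $\alpha<n$ is needed here, since $n=2$ and $1<\alpha\leqslant 2$, but it is easy. FMO gives $\frac{1}{m(B(b,r))}\int_{B(b,r)}Q^{\,\prime}\,dm=O(\log\frac1r)$, and Jensen's inequality on dyadic annuli then yields (\ref{eq6}) for every $\alpha\in(1,n]$.
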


\medskip
\begin{remark}\label{rem1}
Put
$$\widetilde{Q}(x)=\begin{cases}Q(x), & Q(x)\geqslant 1\\ 1, & Q(x)<1\end{cases}\,.$$
If $Q$ satisfies~(\ref{eq1D}) for some $\Phi\colon\overline{{\Bbb
R}^{+}}\rightarrow\overline{{\Bbb R}^{+}},$ then also
$\widetilde{Q}(x)$ satisfies~(\ref{eq1D}). Indeed,
$$
\int\limits_D\Phi(\widetilde{Q}(x))\frac{dm(x)}{\left(1+|x|^2\right)^n}=
\int\limits_{\{x\in D: Q(x)< 1
\}}\Phi(\widetilde{Q}(x))\frac{dm(x)}{\left(1+|x|^2\right)^n}+$$$$+
\int\limits_{\{x\in D: Q(x)\geqslant 1\}
}\Phi(\widetilde{Q}(x))\frac{dm(x)}{\left(1+|x|^2\right)^n}\leqslant
M_0+\Phi(1)\int\limits_{D}\frac{dm(x)}{\left(1+|x|^2\right)^n}=M^{\,\prime}_0<\infty\,.$$
\end{remark}

\section{Preliminaries}

Let $x_0\in {\Bbb R}^n,$ $0<r_1<r_2<\infty,$
\begin{equation}\label{eq1ED}
S(x_0,r) = \{ x\,\in\,{\Bbb R}^n : |x-x_0|=r\}\,, \quad B(x_0, r)=\{
x\,\in\,{\Bbb R}^n : |x-x_0|<r\}\end{equation}
and
\begin{equation}\label{eq1**}
A=A(x_0, r_1,r_2)=\left\{ x\,\in\,{\Bbb R}^n:
r_1<|x-x_0|<r_2\right\}\,.\end{equation}
The most important element of the paper is the connection between
Sobolev and Orlicz-Sobolev classes and lower and ring $Q$-mappings.
The theoretical part of this connection is mainly established
in~\cite{Sev$_2$}. Our immediate goal is to present below the
corresponding definitions, as well as facts revealing the indicated
connection.

\medskip
If $\rho:{\Bbb R}^n\rightarrow[0,\infty]$ is a Borel function, then
its {\it integral over a $k$-dimensional surface} $S$ in ${\Bbb
R}^n$, $n\geqslant 2,$ is defined by the equality
\begin{equation}\label{eq8.2.5} \int\limits_S \rho\ d{\cal {A}}\
:=\ \int\limits_{{\Bbb R}^n}\rho(y)\:N(S, y)\ d{\cal H}^ky\
\,,\end{equation}
where $N(S, y)$ denotes the multiplicity function of $S.$ Given a
family ${\cal S}$ of such $k$-dimensional surfaces $S$ in ${\Bbb
R}^n$, a Borel function $\ \rho:{\Bbb R}^n\rightarrow[0,\infty]$ is
called {\it admissible} for $\cal S$, abbr.
$\rho\in\mathrm{adm}\,\cal S$, if
\begin{equation}\label{eq8.2.6}
\int\limits_S\rho^k\ d{\cal{A}}\ \geqslant\ 1\end{equation} for
every $S\in\cal S$. Given $p\in[k,\infty)$, the {\it $p$-modulus} of
$\cal S$ is the quantity
\begin{equation}\label{M} M_p({\cal S})\ =\
\inf_{\rho\in\mathrm{adm}\,\cal S}\int\limits_{{\Bbb R}^n}\rho^p(x)\
dm(x)\,.\end{equation}
The following definition is from~\cite[Chapter~9]{MRSY}. Let $D$ and
$D^{\,\prime}$ be domains in ${\Bbb R}^n$ with $n\geqslant 2$.
Suppose that $x_0\in\overline {D}\setminus\{\infty\}$ and $Q\colon
D\rightarrow(0,\infty)$ is a Lebesgue measurable function. A mapping
$f:D\rightarrow D^{\,\prime}$ is called a {\it lower $Q$-mapping at
a point $x_0$ relative to $p$-modulus} if
\begin{equation}\label{eq1A}
M_p(f(\Sigma_{\varepsilon}))\geqslant \inf\limits_{\rho\in{\rm
ext}_p\,{\rm adm}\Sigma_{\varepsilon}}\int\limits_{D\cap A(x_0,
\varepsilon, r_0)}\frac{\rho^p(x)}{Q(x)}\,dm(x)
\end{equation}
for $A(x_0, \varepsilon, r_0)=\{x\in {\Bbb R}^n\,:\,
\varepsilon<|x-x_0|<r_0\}$, $r_0\in(0,d_0)$, $d_0=\sup\limits_{x\in
D}|x-x_0|$, where $\Sigma_{\varepsilon}$ is the family of all
intersections of the spheres $S(x_0, r)$ with the domain $D$, $r\in
(\varepsilon, r_0)$. If $p=n$, we say that $f$ is a lower
$Q$-mapping at $x_0$. We say that $f$ is a lower $Q$-mapping
relative to $p$-modulus in $B\subset \overline {D}$ if (\ref{eq1A})
is true for all $x_0\in B$. The following statement can be proved
much as Theorem~9.2 in \cite{MRSY}, so we omit the arguments.

\medskip
\begin{lemma}\label{lem4} {\it\, Let $D$,
$D^{\,\prime}\subset\overline{{\Bbb R}^n}$, let $x_0\in\overline
{D}\setminus\{\infty\}$, and let $Q$ be a Lebesgue measurable
function. A mapping $f\colon D\rightarrow D^{\,\prime}$ is a lower
$Q$-mapping relative to the $p$-modulus at a point $x_0$, $p>n-1$,
if and only if $M_p(f(\Sigma_{\varepsilon}))\geqslant
\int\limits_{\varepsilon}^{r_0} \frac{dr}{\|\,Q\|_{s}(r)}$ for all
$\varepsilon\in(0,r_0),\ r_0\in(0,d_0)$, $d_0=\sup\limits_{x\in
D}|x-x_0|$, $s=\frac{n-1}{p-n+1}$, where, as above,
$\Sigma_{\varepsilon}$ denotes the family of all intersections of
the spheres $S(x_0, r)$ with $D$, $r\in (\varepsilon, r_0)$, $\|
Q\|_{s}(r)=\left(\int\limits_{D(x_0,r)}Q^{s}(x)\,d{\mathcal{A}}\right)^{\frac{1}{s}}$
is the $L_{s}$-norm of $Q$ over the sphere ${D(x_0,r)=\{x\in D\,:\,
|x-x_0|=r\}=D\cap S(x_0,r)}\,.$}
\end{lemma}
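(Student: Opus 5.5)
We prove both implications by following the scheme of Theorem~9.2 in~\cite{MRSY}, with the exponent $n$ replaced by $p$. The whole argument comes down to computing
$$I(\varepsilon,r_0):=\inf\limits_{\rho\in{\rm ext}_p\,{\rm adm}\,\Sigma_{\varepsilon}}\int\limits_{D\cap A(x_0,\varepsilon,r_0)}\frac{\rho^p(x)}{Q(x)}\,dm(x)$$
and showing that it equals $\int\limits_{\varepsilon}^{r_0}\frac{dr}{\|Q\|_{s}(r)}$. Once this identity holds, (\ref{eq1A}) and the inequality in the statement say the same thing, which gives both directions. Here extended admissibility means that $\int_{D(x_0,r)}\rho^{n-1}\,d\mathcal{A}\geqslant 1$ holds for almost every $r\in(\varepsilon,r_0)$, with $k=n-1$ in (\ref{eq8.2.6}).

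\emph{Lower bound.} Fix $\rho\in{\rm ext}_p\,{\rm adm}\,\Sigma_\varepsilon$ and apply Fubini in spherical coordinates:
$$\int\limits_{D\cap A}\frac{\rho^p}{Q}\,dm=\int\limits_{\varepsilon}^{r_0}\Bigl(\int\limits_{D(x_0,r)}\frac{\rho^p}{Q}\,d\mathcal{A}\Bigr)dr.$$
On each sphere we use H\"older's inequality with exponents $q=\frac{p}{n-1}>1$ and $q'=\frac{p}{p-n+1}$, after writing $\rho^{n-1}=\bigl(\rho^{n-1}Q^{-(n-1)/p}\bigr)\,Q^{(n-1)/p}$:
$$1\leqslant\int\limits_{D(x_0,r)}\rho^{n-1}\,d\mathcal{A}\leqslant\Bigl(\int\limits_{D(x_0,r)}\frac{\rho^p}{Q}\,d\mathcal{A}\Bigr)^{\frac{n-1}{p}}\Bigl(\int\limits_{D(x_0,r)}Q^{s}\,d\mathcal{A}\Bigr)^{\frac{p-n+1}{p}}.$$
The exponent on $Q$ in the second factor is $\frac{n-1}{p}\cdot q'=\frac{n-1}{p-n+1}=s$, and the hypothesis $p>n-1$ is exactly what makes $q>1$ so that H\"older applies. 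Raising to the power $p/(n-1)$ gives
$$\int\limits_{D(x_0,r)}\frac{\rho^p}{Q}\,d\mathcal{A}\geqslant\|Q\|_s(r)^{-1}\qquad\text{for a.e. } r.$$
Integrating in $r$ yields $I\geqslant\int_\varepsilon^{r_0}\frac{dr}{\|Q\|_s(r)}$.

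\emph{Upper bound.} We use the extremal function coming from the equality case in H\"older:
$$\rho_0(x)=\frac{Q^{s/(n-1)}(x)}{\|Q\|_s(|x-x_0|)^{s/(n-1)}}$$
on $D\cap A$, and $\rho_0=0$ elsewhere. Then $\int_{D(x_0,r)}\rho_0^{n-1}\,d\mathcal{A}=1$ whenever $0<\|Q\|_s(r)<\infty$. A direct computation using $\frac{sp}{n-1}-1=s$ gives $\int\rho_0^p/Q\,dm=\int dr/\|Q\|_s(r)$.

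The delicate part is the degenerate radii, together with measurability of $\rho_0$. On the set where $\|Q\|_s(r)=\infty$ the integrand $1/\|Q\|_s(r)$ vanishes; there we instead take $\rho_0$ of the form $\delta\cdot\chi$ with a truncation of $Q$ and let $\delta\to0$, or approximate $Q$ by $\min(Q,N)$ and pass to the limit by monotone convergence. On the set where $\|Q\|_s(r)=0$ we put $\rho_0=\infty$; this costs nothing in $\int\rho^p/Q$, with the convention $\infty/\infty$ handled as in~\cite{MRSY}, because $Q>0$ there is contradicted except on a null set. Borel measurability of $\rho_0$ follows after replacing $Q$ by a Borel representative, using that $r\mapsto\|Q\|_s(r)$ is measurable by Fubini. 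These technical points are exactly those treated in~\cite[Theorem~9.2]{MRSY}, and we expect them to be the main obstacle. With them handled, the two bounds give $I=\int_\varepsilon^{r_0}\frac{dr}{\|Q\|_s(r)}$, and the equivalence follows.
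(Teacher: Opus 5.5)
Your proposal is correct and follows the route the paper indicates: the paper omits the proof and refers to Theorem~9.2 of \cite{MRSY}, whose argument is your scheme of Fubini on the spheres $D(x_0,r)$, H\"older with exponents $p/(n-1)$ and $p/(p-n+1)$ for the lower bound, and the normalized extremal density $Q^{1/(p-n+1)}/\|Q\|_s^{1/(p-n+1)}$ for the upper bound. The one point to tighten is the degenerate radii. On $\{r:\|Q\|_s(r)=\infty\}$, a truncation $\min(Q,N)$ at a \emph{fixed} level $N$ can leave $\int dr/\|\min(Q,N)\|_s(r)$ infinite for every $N$, so choose the level $N(r)$ measurably in $r$ (e.g.\ the least $N$ with $\|\min(Q,N)\|_s(r)\geqslant 1/\delta$). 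The case $\|Q\|_s(r)=0$ needs no special $\rho$, since for $Q>0$ and $r\in(0,d_0)$ the set $D(x_0,r)$ is a nonempty open subset of $S(x_0,r)$, so $\|Q\|_s(r)>0$.
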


\medskip
Let
\begin{equation}\label{eq1_A_2}
N(y, f, A)\,=\,{\rm card}\,\left\{x\in A: f(x)=y\right\}\,,
\end{equation}
\begin{equation}\label{eq1_A_3}
N(f, A)\,=\,\sup\limits_{y\in{\Bbb R}^n}\,N(y, f, A)\,.
\end{equation}
The following lemma holds, see e.g. \cite[Lemma~5.1]{Sev$_4$}.
\begin{lemma}\label{thOS4.1} {\it\,Let $D$ be a domain in ${\Bbb R}^n$,
$n\geqslant  3$, and let $\varphi\colon (0,\infty)\rightarrow
(0,\infty)$ be a monotone nondecreasing function satisfying
(\ref{eq1_A_10}). If $p>n-1$, then every open discrete mapping
$f\colon D\rightarrow {\Bbb R}^n$ of class $W^{1,\varphi}_{{\rm
loc}}$ with finite distortion and such that $N(f, D)<\infty$ is a
lower $Q$-mapping relative to the $p$-modulus at every point
$x_0\in\overline {D},$ where
$$
Q(x)=N(f, D)\cdot K^{\frac{p-n+1}{n-1}}_{I, \alpha}(x, f),
$$
$\alpha:=\frac{p}{p-n+1}$, the inner dilation $K_{I,\alpha}(x, f)$
for~$f$ at~$x$ of order~$\alpha$ is defined by \eqref{eq0.1.1A}, and
the multiplicity $N(f, D)$ is defined by the
relation~(\ref{eq1_A_3}).}
\end{lemma}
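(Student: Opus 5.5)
The plan is to prove the statement in the usual way for lower $Q$-mappings, following the scheme of Theorem~9.2 in~\cite{MRSY}, with the counting function $N(f,D)$ used to control the multiplicity. Fix $x_0\in\overline{D}$, $0<\varepsilon<r_0<d_0$, and let $\Sigma_\varepsilon$ be the family of spheres $D(x_0,r)=D\cap S(x_0,r)$, $r\in(\varepsilon,r_0)$. Given an arbitrary $\rho'\in{\rm adm}\,f(\Sigma_\varepsilon)$ relative to $p$-modulus (so $\int_{f(D(x_0,r))}\rho'^{\,n-1}\,d\mathcal{A}\geqslant 1$), we will build a function $\rho$ on $D$ that is extensively admissible for $\Sigma_\varepsilon$ and satisfies
$$\int_{D\cap A(x_0,\varepsilon,r_0)}\frac{\rho^p(x)}{Q(x)}\,dm(x)\leqslant\int_{{\Bbb R}^n}\rho'^{\,p}(y)\,dm(y).$$
Taking the infimum over $\rho'$ then gives~(\ref{eq1A}). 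The natural choice is $\rho(x)=\rho'(f(x))\,l(f'(x))$, or more precisely $\rho(x)=\rho'(f(x))\,\Vert (f'(x))^{\#}\Vert^{1/(n-1)}$ with the adjugate matrix. This is the quantity that governs how $(n-1)$-dimensional area on spheres transforms.

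\emph{Step 1 (regularity on spheres).} Here we use the Calderon condition~(\ref{eq1_A_10}) and $n\geqslant 3$. By the results of~\cite{Sev$_2$} (Kovtonyuk--Ryazanov--Salimov--Sevost'yanov type), for almost every $r$ the restriction of $f$ to $S(x_0,r)\cap D$ belongs to $W^{1,n-1}$ locally, is differentiable $\mathcal{H}^{n-1}$-a.e., and has the $N$-property with respect to $\mathcal{H}^{n-1}$. This step is where Orlicz--Sobolev regularity replaces $W^{1,n}$, and I expect it to be the main technical obstacle. It has to be imported, using that a.e. sphere is "good" in Fubini's sense. For these $r$ the area formula on the sphere gives
$$\int_{f(D(x_0,r))}\rho'^{\,n-1}\,d\mathcal{A}\leqslant\int_{D(x_0,r)}\rho'^{\,n-1}(f(x))\,\Vert f'(x)|_{T}\Vert_{\wedge^{n-1}}\,d\mathcal{H}^{n-1},$$
and the tangential $(n-1)$-Jacobian is bounded by $|J(x,f)|/l(f'(x))$. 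It follows that $\rho$ satisfies $\int_{D(x_0,r)}\rho^{n-1}\,d\mathcal{H}^{n-1}\geqslant 1$ for a.e. $r$, i.e. $\rho\in{\rm ext}_p\,{\rm adm}\,\Sigma_\varepsilon$. Points where $J=0$ but $f'\neq0$ have $K_{I,\alpha}=\infty$ and are handled by finite distortion. On the set where $f'=0$ we put $\rho=0$; by the $N$-property on good spheres the image of this set has zero area, so it does not matter.

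\emph{Step 2 (volume estimate).} We compute $\rho^p/Q$. With $\rho^{n-1}\approx\rho'^{\,n-1}(f)\,|J|/l(f')$, the exponents are chosen so that $\rho^p\leqslant \rho'^{\,p}(f)\,|J|\cdot\left(|J|/l(f')^{\alpha}\right)^{(p-n+1)/(n-1)}$, using $\alpha=p/(p-n+1)$. Dividing by $Q=N(f,D)\,K_{I,\alpha}^{(p-n+1)/(n-1)}$ leaves $\rho'^{\,p}(f(x))|J(x,f)|/N(f,D)$. Integrating and applying the change of variables formula for Sobolev maps with multiplicity, $\int_D\rho'^{\,p}(f)|J|\,dm\leqslant\int\rho'^{\,p}(y)N(y,f,D)\,dm(y)\leqslant N(f,D)\int\rho'^{\,p}\,dm$, gives the desired inequality. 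Finite distortion and the Lusin-type inequality (only "$\leqslant$" is needed) make this valid for $W^{1,1}_{\rm loc}$ maps.

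The main obstacle is Step~1: justifying the sphere-wise area inequality for a.e. $r$ under only the Calderon condition. I would take it from~\cite{Sev$_2$}, \cite[Lemma~5.1]{Sev$_4$} rather than reprove it. After that, the exponent bookkeeping in Step~2 is routine.
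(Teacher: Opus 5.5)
Your proposal is correct. It is essentially the standard argument behind this lemma, which the paper does not prove but only cites from \cite[Lemma~5.1]{Sev$_4$}: you pull back an admissible $\rho'$ by $\rho=\rho'(f)\,\Vert (f')^{\#}\Vert^{1/(n-1)}$, get extensive admissibility from a.e.\ differentiability and the $N$-property of $f$ on almost every sphere (this is where the Calderon condition and $n\geqslant 3$ enter), and observe that $\rho^p/Q=\rho'^{\,p}(f)\,|J(x,f)|/N(f,D)$, so the change-of-variables inequality with multiplicity finishes the proof.

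A few minor points need tidying. Discard the first guess $\rho'(f)\,l(f')$. Credit the sphere regularity to the Kovtonyuk--Ryazanov--Salimov--Sevost'yanov results on Orlicz--Sobolev classes rather than to \cite{Sev$_2$}. The zero image area of $\{f'=0\}\cap S(x_0,r)$ follows from the area formula (the tangential Jacobian vanishes there), not from the $N$-property alone.
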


\medskip
The statement similar to Lemma~\ref{thOS4.1} holds for $n=2,$ but
for Sobolev classes (see, e.g., \cite[Theorem~4]{Sev$_1$}).

\medskip
\begin{lemma}\label{lem3} {\it\,Let $D$ be a domain in ${\Bbb R}^2$,
$n\geqslant  2$, and let $p>1.$ Then every open discrete mapping
$f\colon D\rightarrow {\Bbb R}^2$ of the class $W^{1,1}_{{\rm loc}}$
with finite distortion and such that $N(f, D)<\infty$ is a lower
$Q$-mapping relative to the $p$-modulus at every point
$x_0\in\overline {D}$ for
$$
Q(x)=N(f, D)\cdot K^{p-1}_{I, \alpha}(x, f),
$$
$\alpha:=\frac{p}{p-1}$, where the inner dilation $K_{I,\alpha}(x,
f)$ for~$f$ at~$x$ of order~$\alpha$ is defined by \eqref{eq0.1.1A},
and the multiplicity $N(f, D)$ is defined by the
relation~(\ref{eq1_A_3}).}
\end{lemma}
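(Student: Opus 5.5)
The statement immediately above is Lemma~\ref{lem3}, the planar analogue of Lemma~\ref{thOS4.1}. I would follow the scheme of \cite[Theorem~9.2]{MRSY} and \cite[Lemma~5.1]{Sev$_4$}, replacing the Calderon condition by planar facts. In the plane, the ``surfaces'' $\Sigma_\varepsilon$ are arcs of circles $D(x_0,r)=D\cap S(x_0,r)$. A function $W^{1,1}_{\rm loc}$ in $\mathbb R^2$ is absolutely continuous on almost every circle centred at $x_0$. By the Fubini-type argument of Gehring and Väisälä (Lemma~\ref{lem4} setting), for almost every $r$ the restriction $f|_{D(x_0,r)}$ is locally absolutely continuous, has the $N$-property with respect to length, and is differentiable ${\mathcal H}^1$-a.e. with derivative equal to the tangential component of $f'(x)$.

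Key steps in order. First, by Lemma~\ref{lem4} it suffices to show
$$M_p(f(\Sigma_\varepsilon))\geqslant\int_\varepsilon^{r_0}\frac{dr}{\|Q\|_s(r)},\qquad s=\frac{1}{p-1}.$$
To do this, fix an admissible $\rho'$ for $f(\Sigma_\varepsilon)$, meaning $\int_{f(D(x_0,r))}\rho'\,d\mathcal A\geqslant1$, and put $\rho(x)=\rho'(f(x))\,\|f'(x)\|$, or more precisely the tangential derivative. Second, for a.e. $r$, the length area formula on the rectifiable curve $f|_{D(x_0,r)}$ gives
$$1\leqslant\int_{f(D(x_0,r))}\rho'\,d\mathcal A\leqslant\int_{D(x_0,r)}\rho'(f(x))\,|\partial_\tau f(x)|\,d\mathcal H^1 .$$
The multiplicity function is dominated because $f$ is counted with multiplicity in the surface integral. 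Hence the right-hand side is at least $1$, so $\rho$ is extensively admissible for $\Sigma_\varepsilon$.

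Third, apply Hölder on each circle with exponents chosen so that
$$\rho'^p(f(x))\,|\partial_\tau f|^p\leqslant \rho'^p(f(x))\,|J(x,f)|\,K_{I,\alpha}^{p-1}(x,f).$$
This follows from $|\partial_\tau f|\leqslant\|f'\|$ and, in the plane, $\|f'\|=|J|/l(f')$. With $\alpha=p/(p-1)$ we get $\|f'\|^p=|J|\cdot\big(|J|/l(f')^{\alpha}\big)^{p-1}$, and this uses exactly $\alpha(p-1)=p$ together with $n=2$. Using the finite distortion assumption (where $J=0$ we have $f'=0$), one handles the degenerate sets. Fourth, integrate over $A(x_0,\varepsilon,r_0)$ and apply the change of variables with multiplicity. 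Since $f$ is open discrete in $W^{1,1}_{\rm loc}$ of the plane, it is differentiable a.e. and has Lusin's $N$-property, so
$$\int\rho'^p(f)\,|J|\,dm\leqslant N(f,D)\int\rho'^p\,dm .$$
This gives
$$\int_{A}\frac{\rho^p}{Q}\,dm\leqslant\int\rho'^p\,dm$$
with $Q=N(f,D)\,K_{I,\alpha}^{p-1}$. Taking the infimum over $\rho'$ gives \eqref{eq1A}.

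The main obstacle is the second step: justifying the length–area inequality on almost every circle without a Calderon-type condition. In dimension $n\geqslant3$ that condition is what gives absolute continuity on a.e. sphere. In the plane it is automatic from $W^{1,1}_{\rm loc}$ via ACL on circles (polar coordinates). I would cite the planar $N$-property and a.e. differentiability for open discrete Sobolev maps, as in \cite[Theorem~4]{Sev$_1$}, and check measurability of $\rho$ by taking a Borel modification.
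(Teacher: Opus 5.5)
Your argument is correct and is essentially the standard proof behind the planar result that the paper quotes without proof. It follows the scheme of \cite[Theorem~9.2]{MRSY}: the Calder\'on condition is replaced by absolute continuity of $f$ on almost every circle centred at $x_0$, you take $\rho=\rho'\circ f\cdot\Vert f'\Vert$, and you use the planar identity $\Vert f'\Vert^p=|J(x,f)|\,K_{I,\alpha}^{p-1}(x,f)$ for $\alpha=p/(p-1)$.

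One correction: you cannot invoke Lusin's $N$-property for open discrete $W^{1,1}_{\rm loc}$ maps of finite distortion in the plane, since Ponomarev-type homeomorphisms in $W^{1,p}$, $p<2$, with $J>0$ a.e.\ violate it. You do not need it, because $\int_D\rho'^{\,p}(f(x))\,|J(x,f)|\,dm(x)\leqslant N(f,D)\int_{{\Bbb R}^2}\rho'^{\,p}\,dm$ already follows from the area formula on the full-measure set where $f$ is (approximately) differentiable.
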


\medskip
The following important information concerning the capacity of a
pair of sets relative to a domain can be found in Ziemer's paper
\cite{Zi$_1$}. Let $G$ be a bounded domain in ${\Bbb R}^n$, and let
$C_{0}, C_{1}$ be nonintersecting compact subsets of the closure of
$G$. Put $R=G \setminus (C_{0} \cup C_{1})$ and $R^{\,*}=R \cup
C_{0}\cup C_{1}$, then the {\it $p$-capacity of the pair $C_{0},
C_{1}$ relative to the closure of~$G$} is defined to be the quantity
$C_p[G, C_{0}, C_{1}] = \inf \int\limits_{R} | \nabla u|^{p}\,
dm(x)$, where the infimum is taken over all functions $u$ continuous
on $R^{\,*}$, $u\in ACL(R)$, with $u=1$ on $C_{1}$ and $u=0$ on
$C_{0}$. Such functions will be called admissible for $C_p [G,
C_{0}, C_{1}]$. A set $\sigma \subset {\Bbb R}^n$ is said to {\it
separate $C_{0}$ and $C_{1}$ in $R^{\,*}$} if $\sigma \cap R$ is
closed in $R$ and there exist nonintersecting set $A$ and $B$ open
in $R^{\,*} \setminus \sigma$ and such that $R^{\,*} \setminus
\sigma = A\cup B$, $C_{0}\subset A$, and $C_{1} \subset B$. Let
$\Sigma$ denote the class of all sets separating $C_{0}$ and $C_{1}$
in $R^{\,*}$. Putting $p^{\,\prime} = p/(p-1)$, we introduce the
quantity
\begin{equation}\label{eq4.23}
 \widetilde{M}_{p^{\,\prime}}(\Sigma)=\inf_{\rho\in
\widetilde{\rm adm} \Sigma} \int\limits_{{\Bbb
R}^n}\rho^{\,p^{\,\prime}}\,dm(x)\,,
\end{equation}
where the formula $\rho\in \widetilde{\rm adm}\Sigma$ means that
$\rho$ is a nonnegative Borel function on ${\Bbb R}^n$ such that
\begin{equation} \label{eq13.4.13}
\int\limits_{\sigma \cap R}\rho \,d{\mathcal H}^{n-1} \geqslant
1\quad\forall\, \sigma \in \Sigma.
\end{equation}
Observe that, by Ziemer's result,
\begin{equation}\label{eq3_B}
\widetilde{M}_{p^{\,\prime}}(\Sigma)=C_p[G , C_{0} ,
C_{1}]^{\,-1/(p-1)},
\end{equation}
see \cite[Theorem~3.13]{Zi$_1$} for $p=n$ and \cite[p.~50]{Zi$_2$}
for $1<p<\infty$. We also observe that, by a result of Hesse,
\begin{equation}\label{eq4_A}
M_p(\Gamma(E, F, D))= C_p[D, E, F]
\end{equation}
under the condition $(E\cup F)\cap\partial D=\varnothing$
see~\cite[Theorem~5.5]{Hes}. Shlyk has proved that the condition
$(E\cup F)\cap\partial D=\varnothing$ may be removed,
see~\cite[Theorem~1]{Shl}.

\medskip
The following statement holds, see e.g. \cite[Lemma~7.4,
Ch.~7]{MRSY} for $p=n$ and \cite[Lemma~2.2]{Sal} for $p\ne n.$

\medskip
\begin{proposition}\label{pr1A}
{\,\it Let $x_0 \in {\Bbb R}^n,$ $Q(x)$ be a Lebesgue measurable
function, $Q:{\Bbb R}^n\rightarrow [0, \infty],$ $Q\in
L_{loc}^1({\Bbb R}^n).$ We set $A:=A(x_0, r_1, r_2)=\{ x\,\in\,{\Bbb
R}^n : r_1<|x-x_0|<r_2\}$ and
$\eta_0(r)=\frac{1}{Ir^{\frac{n-1}{p-1}}q_{x_0}^{\frac{1}{p-1}}(r)},$
where $I:=I=I(x_0,r_1,r_2)=\int\limits_{r_1}^{r_2}\
\frac{dr}{r^{\frac{n-1}{p-1}}q_{x_0}^{\frac{1}{p-1}}(r)}$ and
$q_{x_0}(r):=\frac{1}{\omega_{n-1}r^{n-1}}\int\limits_{|x-x_0|=r}Q(x)\,d{\mathcal
H}^{n-1}$ is the integral average of the function $Q$ over the
sphere $S(x_0, r).$ Then
\begin{equation*}\label{eq10A_1}
\frac{\omega_{n-1}}{I^{p-1}}=\int\limits_{A} Q(x)\cdot
\eta_0^p(|x-x_0|)\ dm(x)\leqslant\int\limits_{A} Q(x)\cdot
\eta^p(|x-x_0|)\ dm(x)
\end{equation*}
for any Lebesgue measurable function $\eta :(r_1,r_2)\rightarrow
[0,\infty]$ such that
$\int\limits_{r_1}^{r_2}\eta(r)\,dr=1. $ Moreover, (\ref{eq10A_1})
holds for similar functions $\eta$ with
$\int\limits_{r_1}^{r_2}\eta(r)\,dr\geqslant 1$ (see, e.g.,
\cite[Remark~3.1]{Sev$_3$}).}
\end{proposition}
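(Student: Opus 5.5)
The statement to prove is Proposition~\ref{pr1A}. I would establish it by a direct Hölder-inequality argument in polar coordinates centred at $x_0$. The cases $I=0$ and $I=\infty$ are degenerate and are handled by convention.

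\textbf{Equality.} Fubini's theorem in spherical coordinates gives
$$\int\limits_A Q(x)\eta^p(|x-x_0|)\,dm(x)=\omega_{n-1}\int\limits_{r_1}^{r_2} r^{n-1}q_{x_0}(r)\eta^p(r)\,dr.$$
This uses only that $Q$ is measurable and nonnegative; local integrability guarantees that $q_{x_0}(r)$ is finite for almost every $r$. Substituting $\eta=\eta_0$, the integrand becomes
$$r^{n-1}q_{x_0}(r)\,I^{-p}\,r^{-\frac{(n-1)p}{p-1}}q_{x_0}^{-\frac{p}{p-1}}(r)=I^{-p}\,r^{-\frac{n-1}{p-1}}q_{x_0}^{-\frac{1}{p-1}}(r).$$
Integrating over $(r_1,r_2)$ yields $\omega_{n-1}I^{-p}\cdot I=\omega_{n-1}/I^{p-1}$. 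This also confirms that $\int_{r_1}^{r_2}\eta_0\,dr=1$, so $\eta_0$ is itself admissible.

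\textbf{Inequality.} Let $\eta$ be measurable with $\int_{r_1}^{r_2}\eta\,dr=1$. On the set where $q_{x_0}(r)=\infty$, the right-hand side is infinite wherever $\eta>0$. On the set where $q_{x_0}=0$, the integrand of $I$ is infinite, so if this set has positive measure then $I=\infty$ and the left-hand side is $0$. So we may assume $0<q_{x_0}<\infty$ almost everywhere. Write
$$\eta=\bigl(\eta\, r^{\frac{n-1}{p}}q_{x_0}^{\frac1p}\bigr)\cdot\bigl(r^{-\frac{n-1}{p}}q_{x_0}^{-\frac1p}\bigr)$$
and apply Hölder's inequality with exponents $p$ and $p/(p-1)$:
$$1\leqslant\int\limits_{r_1}^{r_2}\eta\,dr\leqslant\Bigl(\int\limits_{r_1}^{r_2}r^{n-1}q_{x_0}\eta^p\,dr\Bigr)^{1/p}I^{(p-1)/p}.$$
Raising to the $p$-th power and multiplying by $\omega_{n-1}$ gives exactly the required bound $\omega_{n-1}/I^{p-1}\leqslant\int_A Q\eta^p\,dm$. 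The same computation works verbatim when $\int\eta\geqslant1$, which covers the final claim.

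The main (minor) obstacle is the measure-theoretic bookkeeping: justifying Fubini's theorem for the sphere averages, and treating the exceptional sets where $q_{x_0}\in\{0,\infty\}$ together with the edge cases $I\in\{0,\infty\}$. Once those are dispatched, Hölder's inequality does the work, with equality attained precisely when $\eta$ is proportional to $\eta_0$.
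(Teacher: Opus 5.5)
Your argument is correct and matches the standard proof behind the results the paper cites for this proposition (the paper gives no proof of its own, only \cite[Lemma~7.4]{MRSY} and \cite[Lemma~2.2]{Sal}). Those proofs also reduce via Tonelli in spherical coordinates to $\omega_{n-1}\int_{r_1}^{r_2} r^{n-1}q_{x_0}(r)\eta^p(r)\,dr$ and then apply Hölder's inequality with exponents $p$ and $p/(p-1)$, and your observation that the case $\int\eta\geqslant 1$ follows verbatim is right. One cosmetic point: the set where $q_{x_0}=\infty$ is null by local integrability, so your separate remark that the right-hand side is infinite there is unnecessary and, as worded, slightly inaccurate.
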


The following lemma was proved earlier for balls,
see~\cite[Theorem~5]{Sev$_2$}. Apparently, its presentation in
application to ``domains with prime ends'' is made for the first
time.

\medskip
\begin{lemma}\label{lem4A}
{\,\it Let $f:D\rightarrow {\Bbb R}^n$ be a bounded, open and
discrete mapping and let $G\subset D$ be domain in which $f$ is a
closed lower $Q$-mapping with respect to $p$-modulus, $Q\in
L_{loc}^{\frac{n-1}{p-n+1}}({\Bbb R}^n),$ $n-1<p,$ and
$\alpha:=\frac{p}{p-n+1}.$ Let $D$ be a regular domain, let $P_0\in
E_D$ and let $d_m,$ $m=1,2,\ldots,$ be a sequence of nested domains
corresponding to $P_0$ such that the cuts $\sigma_m$ corresponding
to $d_m,$ $m=1,2,\ldots, $ lies on the spheres $S(x_0, r_m),$ where
$x_0\in \partial D$ and $r_m\rightarrow 0$ as $m\rightarrow\infty.$
Then, given $\varepsilon_1>0$ and $m\in {\Bbb N}$ there is
$k_0=k_0(m, \varepsilon_1)\in {\Bbb N}$ such that the relation
\begin{equation}\label{eq3A_2}
M_{\alpha}(f(\Gamma(C_1, C_2, G)))\leqslant \int\limits_{A(x_0, r_k,
\varepsilon_1)}Q^{\frac{n-1}{p-n+1}}(x)
\eta^{\alpha}(|x-x_0|)\,dm(x)
\end{equation}
holds for any $k\geqslant k_0(m, \varepsilon_1),$ every compactum
$C_2\subset G\setminus d_m$ with $d((f|_{G})^{\,-1}(f(C_2)),
x_0)\geqslant \varepsilon_1,$ $(f|_{G})^{\,-1}(f(C_2))=\{x\in G:
\,\exists\,y\in f(C_2): f(x)=y\},$ and each compactum $C_1\subset
d_k\cap G,$ where $A(x_0, r_k, \varepsilon_1)=\{x\in {\Bbb R}^n:
r_k<|x-x_0|<\varepsilon_1\}$ and $\eta: (r_k,
\varepsilon_1)\rightarrow [0,\infty]$ is an arbitrary Lebesgue
measurable function such that
\begin{equation}\label{eq6B}
\int\limits_{r_k}^{\varepsilon_1}\eta(r)\,dr=1\,.
\end{equation}
}
\end{lemma}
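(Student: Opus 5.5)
The plan is to transfer the modulus estimate from the image side to the preimage side, using Ziemer's duality \eqref{eq3_B} together with Hesse--Shlyk \eqref{eq4_A}, and then to invoke the lower $Q$-property on the family of spherical sections $\Sigma$ between the two radii. This follows the scheme of \cite[Theorem~5]{Sev$_2$}. Fix $m$ and $\varepsilon_1$. Since $r_k\to 0$, I choose $k_0$ so that $r_k<\varepsilon_1$ for $k\geqslant k_0$, and also so that $d_k\subset d_m\cap B(x_0,\varepsilon_1)$, shrinking further if needed. Such a choice exists because the cuts $\sigma_k$ lie on $S(x_0,r_k)$, the domains are nested, and $d_k$ is the component not containing the earlier cuts.

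Now take compacta $C_1\subset d_k\cap G$ and $C_2\subset G\setminus d_m$ with $d((f|_G)^{-1}(f(C_2)),x_0)\geqslant\varepsilon_1$. Set $\Gamma=\Gamma(C_1,C_2,G)$. Since $f|_G$ is bounded, $f(G)$ is a bounded domain and $f(C_1), f(C_2)$ are compacta in it. If $f(C_1)\cap f(C_2)\neq\varnothing$, a separate argument is needed: the closedness of $f|_G$ combined with $C_1\subset B(x_0,r_k)$ and $(f|_G)^{-1}(f(C_2))$ lying outside $B(x_0,\varepsilon_1)$ shows that this cannot happen. So assume the two images are disjoint.

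By \eqref{eq4_A}, $M_\alpha(f(\Gamma))\leqslant M_\alpha(\Gamma(f(C_1),f(C_2),f(G)))=C_\alpha[f(G),f(C_1),f(C_2)]$. By \eqref{eq3_B} with $p'=\alpha/(\alpha-1)=p/(n-1)$, this equals $\widetilde M_{p/(n-1)}(\Sigma')^{-(\alpha-1)}$, where $\Sigma'$ is the class of sets separating $f(C_1)$ from $f(C_2)$ in $f(G)$.

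The key geometric step is to show that for each $r\in(r_k,\varepsilon_1)$ the image $f(S(x_0,r)\cap G)$ contains (or covers) a set in $\Sigma'$. The sphere $S(x_0,r)$ separates $C_1$ from $(f|_G)^{-1}(f(C_2))$ inside $G$. Because $f|_G$ is closed, open and discrete, it maps components to components and boundary to boundary. Hence any path in $f(G)$ from $f(C_1)$ to $f(C_2)$ lifts, by maximal liftings for closed open discrete maps, to a path in $G$ starting in $C_1$. Since $f|_G$ is closed, the lift ends in $(f|_G)^{-1}(f(C_2))$, so it must cross $S(x_0,r)$, and therefore the image path meets $f(S(x_0,r)\cap G)$. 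Consequently $f(\Sigma_{r_k})$, restricted to $(r_k,\varepsilon_1)$, is minorized by $\Sigma'$: every admissible $\rho$ for $\Sigma'$ in the sense \eqref{eq13.4.13} is admissible for $f(\Sigma_{r_k})$ in the sense \eqref{eq8.2.6} with exponent $n-1$, after the change $\rho\mapsto\rho^{1/(n-1)}$. Note that the moduli $M_p$ of surfaces with $k=n-1$ and $\widetilde M_{p/(n-1)}$ correspond exactly under this change. It follows that $\widetilde M_{p/(n-1)}(\Sigma')\geqslant M_p(f(\Sigma_{r_k}))$.

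Next apply Lemma~\ref{lem4} on the annulus, with $G$ and the ring $A(x_0,r_k,\varepsilon_1)$. This gives $M_p(f(\Sigma))\geqslant\int_{r_k}^{\varepsilon_1}dr/\|Q\|_s(r)$, where $s=\frac{n-1}{p-n+1}$. Combining the pieces,
$$M_\alpha(f(\Gamma))\leqslant\Bigl(\int_{r_k}^{\varepsilon_1}\frac{dr}{\|Q\|_s(r)}\Bigr)^{-(\alpha-1)}.$$
Finally, H\"older's inequality gives the last step: for $\eta$ satisfying \eqref{eq6B}, $1=\int\eta\leqslant(\int\eta^\alpha\|Q\|_s^{s}\ldots)$. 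More precisely, writing $\eta=\eta\|Q\|_s^{1/\alpha'}\cdot\|Q\|_s^{-1/\alpha'}$ and using $\alpha-1=s^{-1}\cdot\ldots$, one obtains $(\int dr/\|Q\|_s)^{-(\alpha-1)}\leqslant\int_{r_k}^{\varepsilon_1}\eta^\alpha(r)\|Q\|_s^s(r)\,dr$. By Fubini the right-hand side equals $\int_{A}Q^{s}\eta^\alpha(|x-x_0|)\,dm$, which is \eqref{eq3A_2}. This mirrors Proposition~\ref{pr1A}; the exponent bookkeeping works out since $\alpha-1=\frac{n-1}{p-n+1}\cdot\frac{1}{\,\cdot\,}$ matches $s$.

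The main obstacle is the separation/lifting step. It requires that the spherical pieces in $G$ map onto sets that genuinely separate $f(C_1)$ from $f(C_2)$ in $f(G)$, and this is where closedness of $f|_G$ and the hypothesis on $(f|_G)^{-1}(f(C_2))$ are used. I would handle it via path lifting, as in \cite{Sev$_2$}, together with the choice of $k_0$ ensuring $d_k\subset B(x_0,r_k)$.
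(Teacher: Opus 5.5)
Your proposal has a genuine gap at the very start, in the choice of $k_0$. You assert that $k_0$ can be taken so that $d_k\subset B(x_0,\varepsilon_1)$, and later that $d_k\subset B(x_0,r_k)$. Neither follows from the hypotheses. Only the cut $\sigma_k$ lies on $S(x_0,r_k)$. The domain $d_k$ is a component of $D\setminus\sigma_k$, and it can leave $B(x_0,r_k)$ through other components of $D\cap S(x_0,r_k)$. Moreover, the impression $\bigcap_k\overline{d_k}$ of a prime end of a regular domain can be a nondegenerate continuum; comb-type domains with cuts on circles centred at the accumulation point of the teeth are examples. In that case no $d_k$ lies in $B(x_0,\varepsilon_1)$ for small $\varepsilon_1$, and no amount of shrinking helps.

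Everything downstream depends on $C_1\subset B(x_0,r_k)$. This includes:
\begin{itemize}
\item the claim that $S(x_0,r)$ separates $C_1$ from $(f|_G)^{-1}(f(C_2))$ for every $r\in(r_k,\varepsilon_1)$;
\item your ``separate argument'' that $f(C_1)\cap f(C_2)=\varnothing$;
\item hence the estimate of $M_\alpha(\Gamma(f(C_1),f(C_2),f(G)))$ over the whole annulus $A(x_0,r_k,\varepsilon_1)$.
\end{itemize}
Notice also that you never use the hypothesis $C_2\subset G\setminus d_m$, which signals that something is missing.

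The missing idea is to pass through the cut, as the paper does.
\begin{itemize}
\item For $k>m$ one has $d_k\subset d_m$, so $C_2\cap d_k=\varnothing$.
\item Every path of $\Gamma(C_1,C_2,G)$ therefore meets $\partial d_k\cap G\subset\sigma_k\cap G$. Hence $\Gamma(C_1,C_2,G)>\Gamma(\sigma_k\cap G,C_2,G)$, and so $M_\alpha(f(\Gamma(C_1,C_2,G)))\le M_\alpha(\Gamma(f(\sigma_k\cap G),f(C_2),f(G)))$.
\item Since $\sigma_k\subset S(x_0,r_k)\subset B(x_0,r)$ for every $r\in(r_k,\varepsilon_1)$, you get $f(\sigma_k\cap G)\subset B_r:=f(B(x_0,r)\cap G)$.
\item Closedness of $f|_G$ together with $d((f|_G)^{-1}(f(C_2)),x_0)\ge\varepsilon_1$ gives $f(C_2)\cap\overline{B_r}=\varnothing$.
\item So $\partial B_r\cap f(G)$ is a genuine separating set in Ziemer's sense, and closedness again shows it is contained in $f(S(x_0,r)\cap G)$.
\end{itemize}
This also settles a point your ``contains (or covers)'' leaves open. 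Since $f$ is not injective, $f(S(x_0,r)\cap G)$ itself may meet the image of the inner set, so it need not separate; the set $\partial B_r\cap f(G)$ avoids this.

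With this replacement, and $k_0=\max\{m+1,k_1\}$ where $r_k<\varepsilon_1$ for $k\ge k_1$, the rest of your argument goes through as in the paper. That rest is: Ziemer's duality plus Hesse--Shlyk, Lemma~\ref{lem4} on the spherical sections in $G$, and your H\"older step. The H\"older step is correct, since $\alpha-1=\frac{n-1}{p-n+1}=s$.
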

\begin{proof}
In general, we focus on the approach used in the proof
of~\cite[Theorem~5]{Sev$_2$}, cf.~\cite[Theorem~5.5]{Sev$_4$}.
Firstly we prove that the sets $f(C_2)$ and $\overline{f(B(x_0,
r)\cap G)}$ do not intersect for every $r\in (0, \varepsilon_1)$.
Suppose the contrary, namely, there exists $\zeta_0\in f(C_2)\cap
\overline{f(B(x_0, r)\cap G)}$. Then
$\zeta_0=\lim\limits_{l\rightarrow\infty} \zeta_l$, where
$\zeta_l\in f(B(x_0, r)\cap G)$. We have $\zeta_l=f(\xi_l)$,
$\xi_l\in B(x_0, r)\cap G$. Since $D$ is bounded by the condition,
$\overline{G}$ is compact. Consequently, the sequence $\xi_l$
contains a convergent subsequence $\xi_{l_m}\rightarrow
\xi_0\in\overline{B(x_0, r)\cap G}$ as $m\rightarrow\infty.$ The
case of $\xi_0\in
\partial G$ is impossible because $f$ is closed in $G$,
consequently, preserves the boundary: $C(f,\partial G)\subset
\partial f(G)$, see e.g. \cite[Theorem~3.3]{Vu}.
But $\zeta_0\in f(C_2)$ and therefore $\zeta_0$ is an inner point of
$f(G)$. Let $\xi_0\in G.$ Then $f(\xi_0)=\zeta_0$ because $f$ is
continuous. But then $\xi_0\in B(x_0, \varepsilon_1)\cap G$ and at
the same time $\xi_0\in (f_G)^{\,-1}(f(C_2))$, which contradicts the
choice of $\varepsilon_1$. Thus, $f(C_2)\cap \overline{f(B(x_0,
r)\cap G)}=\varnothing$ and, therefore,
\begin{equation}\label{eq8_A}
f(C_2)\subset f(G)\setminus \overline{f(B(x_0, r)\cap G)},\qquad
r\in (0, \varepsilon_1).
\end{equation}
Let $\sigma_k$ is a cut of $D$ in $P_0$ which corresponds to $d_k$
and lying on the sphere $S(x_0, r_k),$ $r_k>0.$ Since
$r_k\rightarrow 0$ as $k\rightarrow \infty$ by the conditions of the
lemma, (\ref{eq8_A}) implies that $f(C_2)$ and $f(\sigma_k\cap G)$
are disjoint for sufficiently large $k.$ Let
$k_1=k_1(\varepsilon_1)$ be a number such that $r_{k}<\varepsilon_1$
for any $k\geqslant k_1(\varepsilon_1).$ Now we set $k_0=k_0(m,
\varepsilon_1)=\max\{m+1, k_1(\varepsilon_1)\}.$ Let $k\geqslant
k_0.$

Observe that, $\sigma_k\cap G\ne\varnothing.$ Indeed, we join points
$x\in C_2\subset G\setminus d_m\subset G\setminus d_k$ and $y\in
C_1\subset d_k\cap G$ by a path $\gamma$ in $G.$ Now, $|\gamma|\cap
(\partial d_k\cap G)\ne\varnothing$ (see e.g. \cite[Theorem~1.I.5,
\S46]{Ku}). Now, there is $z\in |\gamma|\cap (\partial d_k\cap G).$
However, in this case, $z\in (\partial d_k\cap G)\subset
\sigma_k\cap G,$ as required. We also observe that, for every $r\in
(r_k, \varepsilon_1)$ the set
$$
A_r:=\partial (f(B(x_0, r)\cap G))\cap f(G)
$$
separates $f(C_2)$ and $f(\sigma_k\cap G)$ in $f(G)$. Indeed,
$$
f(G)=B_r\cup A_r\cup C_r\quad\forall\quad r\in (r_k, \varepsilon_1),
$$
where the sets $B_r:=f(B(x_0, r)\cap G)$ and $C_r:=f(G)\setminus
\overline{f(B(x_0, r)\cap G)}$ are open in $f(G)$, $f(\sigma_k\cap
G)\subset B_r$, $f(C_2)\subset C_r$, and $A_r$ is closed in $f(G)$.

\smallskip
Let $\Sigma_k$ be the family of all sets separating $f(\sigma_k\cap
G)$ and $f(C_2)$ in $f(G)$. Since $f$ is a closed mapping in $G,$ we
observe that
\begin{equation}\label{eq7A} (\partial f(B(x_0, r)\cap
D))\cap f(G)\subset f(S(x_0, r)\cap G),\quad r>0.
\end{equation}
Indeed, let $\zeta_0\in(\partial f(B(x_0,r)\cap G))\cap f(G)$. Then
there exists a sequence $\zeta_l\in f(B(x_0, r)\cap G)$ such that
$\zeta_l\rightarrow \zeta_0$ as $l\rightarrow \infty$, where
$\zeta_l=f(\xi_l)$, $\xi_l\in B(x_0,r)\cap G$. Without loss of
generality, we may assume that $\xi_l\rightarrow \xi_0$ as
$l\rightarrow\infty$. Note that, the case when $\xi_0\in
\partial G$ is impossible, because, in this case, $\zeta_0\in C(f,\partial
G)\subset \partial f(G)$, which contradicts the choice of $\zeta_0.$
Thus, $\xi_0\in G$. Two situations may occur: 1) $\xi_0\in B(x_0 ,
r)\cap G$ and 2) $\xi_0\in S(x_0 , r)\cap G$. Case~1) is impossible
because then $f(\xi_0)=\zeta_0$ and $\zeta_0$ is an inner point of
$f(B(x_0, r)\cap G)$, which contradicts the choice of $\zeta_0$.
Thus,~(\ref{eq7A}) is proved.

\smallskip
Here and below, the unions of the form
$$
\bigcup_{r\in (r_1, r_2)}\partial f(B(x_0, r)\cap G)\cap f(G)
$$
are understood as families of sets. Let
$$
\rho^{n-1}\in \widetilde{{\rm adm}}\bigcup_{r\in (r_k,
\varepsilon_1)}
\partial f(B(x_0, r)\cap G)\cap f(G)
$$
in the sense of (\ref{eq13.4.13}), then also
$$
\rho\in {\rm adm}\bigcup_{r\in (r_k, \varepsilon_1)}
\partial f(B(x_0, r)\cap G)\cap f(G)
$$
in the sense of~(\ref{eq8.2.6}) with $k=n-1$. By (\ref{eq7A}), we
see that
$$\rho\in {\rm adm}\bigcup\limits_{r\in (r_k,
\varepsilon_1)} f(S(x_0, r)\cap G)\,.$$
Since also $\widetilde{M}_q(\Sigma_k)\geqslant M_{q(n-1)}(\Sigma_k)$
for every $q\geqslant  1$, we have
\begin{equation}\label{eq5A}
\begin{split}
\widetilde{M}_{p/(n-1)}(\Sigma_k) &\geqslant
\widetilde{M}_{p/(n-1)}\bigg(\bigcup_{r\in (r_k, \varepsilon_1)}
   \partial f(B(x_0, r)\cap G)\cap f(G)\bigg)
\\
&\geqslant \widetilde{M}_{p/(n-1)} \bigg(\bigcup_{r\in (r_k,
\varepsilon_1)} f(S(x_0, r)\cap G)\bigg)
\\
&\geqslant  M_{p}\bigg(\bigcup_{r\in (r_k, \varepsilon_1)} f(S(x_0,
r)\cap G)\bigg).
\end{split}
\end{equation}

By (\ref{eq3_B}) and (\ref{eq4_A}), it follows that for $p>n-1$ we
have
\begin{equation}\label{eq6A}
\widetilde{M}_{p/(n-1)}(\Sigma_k)=\frac{1}{(M_{\alpha}(\Gamma(f(\sigma_k\cap
G), f(C_2), f(G))))^{1/(\alpha-1)}}.
\end{equation}
Now, by Lemma~\ref{lem4} we obtain that
\begin{gather}\nonumber M_{p}\left(\bigcup\limits_{r\in (r_k, \varepsilon_1)} f(S(x_0,
r)\cap G)\right)\\
\label{eq8B_1} \geqslant \int\limits_{r_k}^{\varepsilon_1}
\frac{dr}{\Vert\,Q\Vert_{s}(r)}= \int\limits_{r_k}^{\varepsilon_1}
\frac{dt}{\omega^{\frac{p-n+1}{n-1}}_{n-1}
t^{\frac{n-1}{\alpha-1}}\widetilde{q}_{x_0}^{\,\frac{1}{\alpha-1}}(t)}\quad\forall\,\,
i\in {\Bbb N}\,, \qquad s=\frac{n-1}{p-n+1}\,,\end{gather} where
$\alpha=\frac{p}{p-n+1},$
$\Vert
Q\Vert_{s}(r)=\left(\int\limits_{D(x_0,r)}Q^{s}(x)\,d{\mathcal{A}}\right)^{\frac{1}{s}}$
is $L_{s}$-norm of the function $Q$ over the sphere $S(x_0,r)\cap G$
and
$\widetilde{q}_{x_0}(r):=\frac{1}{\omega_{n-1}r^{n-1}}\int\limits_{|x-x_0|=r}Q^s(x)\,d{\mathcal
H}^{n-1}.$ Then from (\ref{eq5A})--(\ref{eq8B_1}) it follows that
\begin{equation}\label{eq9C}
M_{\alpha}(\Gamma(f(\sigma_k\cap G), f(C_2), f(G)) \leqslant
\frac{\omega_{n-1}}{I^{\alpha-1}}\,,
\end{equation}
where $I=\int\limits_{r_k}^{\varepsilon_1}\
\frac{dr}{r^{\frac{n-1}{\alpha-1}}\widetilde{q}_{x_0}^{\frac{1}{\alpha-1}}(r)}.$
Since $\sigma_k$ is a cut corresponding to a domain $d_k$ and
$C_2\subset G\setminus d_k\subset D\setminus d_k$ by the
construction, we obtain that $\Gamma(C_1, C_2,
G)>\Gamma(\sigma_k\cap G, C_2, G)$ and, consequently, $f(\Gamma(C_1,
C_2, G))>f(\Gamma(\sigma_k\cap G, C_2, G))\subset
\Gamma(f(\sigma_k\cap G)), f(C_2), f(G)).$ Thus,
\begin{equation}\label{eq1D_1}
M_{\alpha}(f(\Gamma(C_1, C_2, G))\leqslant
M_{\alpha}(\Gamma(f(\sigma_k), f(C_2), f(G)))\,.
\end{equation}
Combining~(\ref{eq9C}) and (\ref{eq1D_1}), we obtain that
\begin{equation}\label{eq1E}
M_{\alpha}(f(\Gamma(C_1, C_2, G)) \leqslant
\frac{\omega_{n-1}}{I^{\alpha-1}}\,.
\end{equation}
The proof is completed by applying Proposition~\ref{pr1A}.~$\Box$
\end{proof}

\medskip
Let $D\subset {\Bbb R}^n,$ $f:D\rightarrow {\Bbb R}^n$ be a discrete
open mapping, $\beta: [a,\,b)\rightarrow {\Bbb R}^n$ be a path, and
$x\in\,f^{\,-1}(\beta(a)).$ A path $\alpha: [a,\,c)\rightarrow D$ is
called a {\it maximal $f$-lifting} of $\beta$ starting at $x,$ if
$(1)\quad \alpha(a)=x\,;$ $(2)\quad f\circ\alpha=\beta|_{[a,\,c)};$
$(3)$\quad for $c<c^{\prime}\leqslant b,$ there is no a path
$\alpha^{\prime}: [a,\,c^{\prime})\rightarrow D$ such that
$\alpha=\alpha^{\prime}|_{[a,\,c)}$ and $f\circ
\alpha^{\,\prime}=\beta|_{[a,\,c^{\prime})}.$ Here and in the
following we say that a path $\beta:[a, b)\rightarrow
\overline{{\Bbb R}^n}$ converges to the set $C\subset
\overline{{\Bbb R}^n}$ as $t\rightarrow b,$ if $h(\beta(t),
C)=\sup\limits_{x\in C}h(\beta(t), C)\rightarrow 0$ as $t\rightarrow
b.$ The following is true (see~\cite[Lemma~3.12]{MRV}).

\medskip
\begin{proposition}\label{pr3}
{\it\, Let $f:D\rightarrow {\Bbb R}^n,$ $n\geqslant 2,$ be an open
discrete mapping, let $x_0\in D,$ and let $\beta: [a,\,b)\rightarrow
{\Bbb R}^n$ be a path such that $\beta(a)=f(x_0)$ and such that
either $\lim\limits_{t\rightarrow b}\beta(t)$ exists, or
$\beta(t)\rightarrow \partial f(D)$ as $t\rightarrow b.$ Then
$\beta$ has a maximal $f$-lifting $\alpha: [a,\,c)\rightarrow D$
starting at $x_0.$ If $\alpha(t)\rightarrow x_1\in D$ as
$t\rightarrow c,$ then $c=b$ and $f(x_1)=\lim\limits_{t\rightarrow
b}\beta(t).$ Otherwise $\alpha(t)\rightarrow \partial D$ as
$t\rightarrow c.$}
\end{proposition}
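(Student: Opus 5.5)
The plan is the classical argument of Martio, Rickman and V\"ais\"al\"a. It has three parts: a local lifting lemma near an arbitrary point of $D$ (including branch points), a Zorn-type argument giving a maximal lifting, and an analysis of the end behaviour of the maximal lifting.

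\emph{Local lifting.} For $x\in D$ and small $r>0$ let $U=U(x,f,r)$ be the $x$-component of $f^{\,-1}(B(f(x),r))$. Since $f$ is open and discrete, for $r$ small enough $\overline{U}$ is a compact subset of $D$, $\overline{U}\cap f^{\,-1}(f(x))=\{x\}$, $f(U)=B(f(x),r)$, $f(\partial U)=S(f(x),r)$, and $f|_U:U\rightarrow B(f(x),r)$ is proper (closed), so $U$ is a normal neighborhood of $x$. I would then prove, or quote from V\"ais\"al\"a/Rickman, that every path in $B(f(x),r)$ starting at $f(x)$ has an $f$-lifting in $U$ starting at $x$. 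Near points outside the branch set $B_f$ this is immediate, since $f$ is a local homeomorphism there. At branch points I would use the standard construction for proper open discrete maps: approximate the path, lift piecewise, and use properness of $f|_U$ together with the discreteness of fibres to extract a convergent limit lifting.

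\emph{Maximal lifting.} Order the liftings $\alpha:[a,c)\rightarrow D$ of $\beta$ starting at $x_0$ (with $c\leqslant b$) by extension. The local lemma at $x_0$ shows this set is nonempty. Every chain has an upper bound, namely the union of its members, which is a lifting on $[a,\sup c)$. Zorn's lemma then gives a maximal lifting $\alpha:[a,c)\rightarrow D$.

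\emph{End behaviour.} Suppose $\alpha(t)\not\rightarrow\partial D$ as $t\rightarrow c$. Since $\overline{D}$ is compact in $\overline{{\Bbb R}^n}$, there are $t_j\rightarrow c$ with $\alpha(t_j)\rightarrow x_1\in D$. I claim $\beta(t)\rightarrow f(x_1)$ as $t\rightarrow c$. If $c<b$ this follows from continuity of $\beta$. If $c=b$, the limit of $\beta$ exists by hypothesis, because $\beta(t)\rightarrow\partial f(D)$ is impossible when $f(x_1)\in f(D)$ is interior. Now take a normal neighborhood $U=U(x_1,f,r)$. For $t$ close to $c$ we have $\beta(t)\in B(f(x_1),r)$, while $f(\partial U)=S(f(x_1),r)$, so $\alpha$ cannot cross $\partial U$ after some time $\alpha(t_j)\in U$. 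Hence $\alpha(t)\in U$ for all $t$ near $c$. Letting $r\rightarrow0$, the neighborhoods $U(x_1,f,r)$ shrink to $x_1$, so $\alpha(t)\rightarrow x_1$, and continuity gives $f(x_1)=\lim\beta(t)$. If we had $c<b$, we could set $\alpha(c)=x_1$ and continue by the local lemma at $x_1$, contradicting maximality; therefore $c=b$. In the remaining case every limit point of $\alpha$ lies in $\partial D$, i.e.\ $\alpha(t)\rightarrow\partial D$.

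\emph{Main obstacle.} I expect the local lifting at branch points to be the hard step, since $f$ is not locally injective there. I would first try to cite it from the normal-neighborhood theory for open discrete maps (V\"ais\"al\"a; Rickman, Ch.~II). Failing that, I would construct it via properness of $f|_U$ and a compactness argument on approximate liftings.
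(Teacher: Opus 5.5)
Your proposal is correct and is the standard Martio--Rickman--V\"ais\"al\"a argument: local liftings in normal neighborhoods, Zorn's lemma for a maximal lifting, and trapping $\alpha$ in arbitrarily small normal neighborhoods $U(x_1,f,r)$ at an interior cluster point. The paper gives no proof of its own and simply quotes the statement as \cite[Lemma~3.12]{MRV}, so your route coincides with the one it relies on, and citing the branch-point local lifting from the normal-neighborhood theory of open discrete maps is as legitimate as the paper's own citation.
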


\section{Proof of Theorem~\ref{th3}}

It is sufficiently to prove~Theorem~\ref{th3} in the case~5.3),
i.e., when the relation~(\ref{eq6}) holds. Indeed, by arguments
given between formulae~(5.17)--(5.18) in \cite{Sev$_4$}, the
condition~5.1) implies~5.3). Besides that, 5.2) implies~5.3), as
well.

\medskip
By the assumption~2) of the theorem, there is a sequence of
neighborhoods $V_k$ of the prime end $P_0,$ $V_k\subset B(P_0,
2^{\,-k}),$ $k=1,2,\ldots ,$ such that $V_k\cap D$ is connected and
$(V_k\cap D)\setminus E$ consists at most of $m$ components,
$1\leqslant m<\infty.$ Since $D$ is a regular domain, there exists a
quasiconformal mapping $\varphi$ of $D$ onto some domain $D_0$ with
a locally quasiconformal boundary. The latter means that there is a
sequence $l_k>0,$ $k=1,2,\ldots ,$ such that $l_k\rightarrow 0$ as
$k\rightarrow\infty,$ and a point $z_0\in
\partial D_0$ such that $\varphi(V_k)\subset B(z_0, l_k),$
$k=1,2,\ldots ,$ $\varphi(V_k)\cap D_0$ is connected and $(V_k\cap
D)\setminus \varphi(E)$ consists at most of $m$ components,
$1\leqslant m<\infty.$

We show that, $D_0\setminus \varphi(E)$ consists of finite number of
components $D^{\,*}_1,\ldots, D^{\,*}_s,$ $1\leqslant s<\infty.$
Assume the contrary, that $D_0\setminus \varphi(E)$ consists of
infinite number of components $D^{\,*}_1, D^{\,*}_2,\ldots .$ Let
$z_i\in D^{\,*}_i,$ $i=1,2,\ldots .$ Since $D_0$ is bounded, there
is a subsequence $z_{i_k},$ $k_1,2,\ldots ,$ converging to some
point $z^{\,*}_0\in \overline{D_0}.$ On the other hand, there is a
neighborhood $V^{\,*}$ of the point $z^{\,*}_0$ such that
$(V^{\,*}\cap D)\setminus \varphi(E)$ consists of $m$ components.
Therefore, there is at least one such a component $K$ intersecting
infinitely many components $D^{\,*}_{i_1}, D^{\,*}_{i_2},\ldots .$
However, by the assumptions of the theorem there is a neighborhood
$V^{\,*}$ of $z^{\,*}_0$ such that $(V^{\,*}\cap D_0)\setminus
\varphi(E)$ consists of finite number of components
$V^{\,*}_1,\ldots , V^{\,*}_m.$ Now, there is $m_0\in[1, m]$ such
that $V^{\,*}_{m_0}$ intersects infinitely many $D^{\,*}_i,$
contradiction. Thus, $D^{\,*}_1,\ldots, D^{\,*}_s,$ $1\leqslant
s<\infty,$ are all the components of $D_0\setminus \varphi(E),$ as
required. It follows from that, $D\setminus E$ consists of some $s$
components, $D_1,\ldots, D_s,$ $1\leqslant s<\infty,$ as well.

\medskip
Now, let us to prove that $f$ is closed in each $D_i,$
$i=1,2,\ldots, $ i.e. $f(S)$ is closed in $f(D_i)$ whenever $S$ is
closed in $D_i.$ Since $f$ is open and discrete, it is sufficient to
prove that $f$ is boundary preserving, that is $C(f,
\partial D_i)\subset \partial f(D_i)$ (see, e.g.,
\cite[Theorem~3.3]{Vu}). Indeed, let $z_k\in D_i,$ $k=1,2,\ldots ,$
let $z_k\rightarrow z_0\in\partial D_i$ and let $f(z_k)\rightarrow
w_0$ as $k\rightarrow\infty.$ We need to prove that $w_0\in
\partial f(D_i).$ Assume the contrary, i.e. $w_0\in f(D_i).$
There are two cases: $z_0\in \partial D$ or $z_0\in D.$ 1) In the
first case, when $z_0\in \partial D,$ we obtain that $w_0\in C(f,
\partial D)\subset E_*.$ But since $w_0\in f(D_i),$ there is
$\zeta_i\in D_i$ such that $f(\zeta_i)=w_0.$ Now, since $w_0\in
E_*,$ we obtain that $\zeta_i\in f^{\,-1}(E_*)=E$ and, consequently,
$\zeta_i\not\in D_i,$ contradiction. 2) Let us consider the second
case, when $z_0\in D.$ Now, by the definition of $D_i,$ we have that
$z_0\in E.$ Now, $w_0\in f(E)\subset E_*.$ But since $w_0\in
f(D_i),$ there is $\zeta_i\in D_i$ such that $f(\zeta_i)=w_0.$ Since
$w_0\in E_*,$ we have that $\zeta_i\in f^{\,-1}(E_*)=E$ and,
consequently, $\zeta_i\not\in D_i,$ contradiction.

\medskip
Let $K$ be a component of $D^{\,\prime}\setminus E_*$ consisting
$f(D_i).$ Observe that $f(D_i)=K.$ Indeed, $f(D_i)\subset K$ by the
definition, because $f^{\,-1}(E_*)=E$ and $D_i\subset D\setminus E.$
Let us to prove that $K\subset f(D_i).$ Let us prove this inclusion
by the contradiction, i.e., let $a_0\in K\setminus f(D_i).$ Chose
$b_0\in f(D_i)$ and join the points $b_0$ and $a_0$ by a path
$\beta:[0, 1]\rightarrow K.$ Let $\alpha:[0, c)\rightarrow D$ be a
maximal $f$-lifting of $\beta$ starting at $c_0:=f^{\,-1}(b_0)\cap
D_i$ (this lifting exists by Proposition~\ref{pr3}). By the same
proposition either one of two cases are possible:
$\alpha(t)\rightarrow x_1$ as $t\rightarrow c,$ or
$\alpha(t)\rightarrow \partial D_i$ as $t\rightarrow c.$ In the
first case, by Proposition~\ref{pr3} we obtain that $c=1$ and
$f(\beta(1))=f(x_1)=a_0$ which contradict with the choice of $a_0.$
In the second case, when $\alpha(t)\rightarrow \partial D_i$ as
$t\rightarrow c,$ we obtain that $f(\beta(c))\in C(f, \partial
D_i)\subset E_*.$ The latter contradicts the definition of $\beta$
because $\beta$ does not contain itself points in $E_*.$

\medskip
Observe that, $D^{\,\prime}\setminus E_*$ consists of finite number
of components. Otherwise, $D^{\,\prime}\setminus
E_*=\bigcup\limits_{i=1}^{\infty}K^{\,\prime}_i,$ $\varnothing \ne
K^{\,\prime}_i\cap K^{\,\prime}_j\ne \varnothing$ for $i\ne j.$  Let
$K_i$ be some component of the set $f^{\,-1}(K^{\,\prime}_i).$ By
the definition $K_i\subset D\setminus E$ and $\varnothing \ne
K_i\cap K_j\ne \varnothing$ for $i\ne j.$ The latter contradicts the
proving above for the number of components of $D\setminus E.$

\medskip
Now, we proceed directly to the proof of the main statement of the
lemma, applying the approach used in the proof of
\cite[Lemma~3.1]{DS}. Suppose the opposite, i.e., the conclusion of
the lemma is not true. Since $D^{\,\prime}$ is bounded, there are
$P_0\in E_D$ and at least two sequences $x_k,$ $y_k\in D,$
$i=1,2,\ldots,$ such that $x_k,$ $y_k\in D,$ $k=1,2,\ldots,$
$x_k\rightarrow P_0,$ $y_k\rightarrow P_0$ as $k\rightarrow \infty,$
and $f(x_k)\rightarrow y,$ $f(y_k)\rightarrow y^{\,\prime}$ as
$k\rightarrow \infty,$ while $y^{\,\prime}\ne y.$
In particular,
\begin{equation}\label{eq1B}
|f(x_k)-f(y_k)|\geqslant \delta>0
\end{equation}
for some $\delta>0$ and all $k\in {\Bbb N}.$
We note that the points $x_k$ and $y_k,$ $k=1,2,\ldots, $ may be
chosen such that $x_k, y_k\not\in E.$ Indeed, since under
condition~1) the set $E$ is nowhere dense in $D,$ there exists a
sequence $x_{ki}\in D\setminus E,$ $i=1,2,\ldots ,$ such that
$x_{ki}\rightarrow x_k$ as $i\rightarrow\infty.$ Put
$\varepsilon>0.$ Due to the continuity of the mapping $f$ at the
point $x_k,$ for the number $k\in {\Bbb N}$ there is a number
$i_k\in {\Bbb N}$ such that $|f(x_{ki_k})-f(x_k)|<\frac{1}{2^k}.$
So, by the triangle inequality
$$|f(x_{ki_k})-y|\leqslant |f(x_{ki_k})-f(x_k)|+
|f(x_k)-y|\leqslant \frac{1}{2^k}+\varepsilon\,,$$
$k\geqslant k_0=k_0(\varepsilon),$ because $f(x_k)\rightarrow y$ as
$k\rightarrow\infty$ and by the choice of $x_k$ and $y.$ Therefore,
$x_k\in D$ may be replaced by $x_{ki_k}\in D\setminus E,$ as
required. We may reason similarly for the sequence $y_k.$

\medskip
Now, by~\cite[Lemma~2.1]{DS} there are subsequences
$\varphi(x_{k_l})$ and $\varphi(y_{k_l}),$ $l=1,2,\ldots ,$
belonging to some sequence of neighborhoods $\varphi(V_l),$
$l=1,2,\ldots ,$ of the point $z_0$ such that ${\rm
diam\,}\varphi(V_l)\rightarrow 0$ as $l\rightarrow\infty$ and, in
addition, any pair $\varphi(x_{k_l})$ and $\varphi(y_{k_l})$ may be
joined by a path $\gamma^{\,*}_l$ in $\varphi(V_l)\cap D_0,$ where
$\gamma^{\,*}_l$ contains at most $m-1$ points in $\varphi(E).$
Without loss of generality, we may assume that the same sequences
$\varphi(x_k)$ and $\varphi(y_k)$ satisfy properties mentioned
above. Let $\gamma^{\,*}_k:[0, 1]\rightarrow D_0,$
$\gamma^{\,*}_k(0)=\varphi(x_k)$ and
$\gamma^{\,*}_k(1)=\varphi(y_k),$ $k=1,2,\ldots .$ Set
$\gamma_k:=\varphi^{\,-1}(\gamma^{\,*}_k).$ We may consider that
$\gamma_k\subset d_k,$ $k=1,2,\ldots,$ where $d_k$ is a sequence of
nested domains corresponding $P_0.$ We may consider that $d_k$
corresponds to cuts $\sigma_k,$ $k=1,2,\ldots ,$ such that
$\sigma_k\subset S(x_0, r_k),$ $x_0\in \partial D$ and
$r_k\rightarrow 0$ as $k\rightarrow \infty,$ see e.g.
\cite[Lemma~3.1]{IS}, cf.~\cite[Lemma~1]{KR}.

\medskip
Observe that, the path $f(\gamma_k)$ contains not more than $m-1$
points in $E_*.$ In the contrary case, there are at least $m$ such
points $b_{1}=f(\gamma_k(t_1)), b_{2}=f(\gamma_k(t_2)),\ldots,
b_m=f(\gamma_k(t_m)),$ $0\leqslant t_1\leqslant t_2\leqslant
\ldots\leqslant t_m\leqslant 1.$ But now the points
$a_{1}=\gamma_k(t_1), a_{2}=\gamma_k(t_2),\ldots, a_m=\gamma_k(t_m)$
are in $f^{\,-1}(E_*)=E$ and simultaneously belong to $\gamma_k.$
This contradicts the definition of $\gamma_k.$

\medskip
Let
\begin{gather*}b_{1}=f(\gamma_k(t_1)),
b_{2}=f(\gamma_k(t_2))\quad,\ldots,\quad b_l=f(\gamma_k(t_l))\,,\\
0\leqslant t_1\leqslant t_2\leqslant \ldots\leqslant t_l\leqslant
1,\qquad 1\leqslant l\leqslant m-1\,,\end{gather*}
be points in $f(\gamma_k)\cap E_*.$ By the relation~(\ref{eq1B}) and
due to the triangle inequality,
\begin{equation}\label{eq6A_1}
\delta\leqslant |f(x_k)-f(y_k)|\leqslant\sum\limits_{r=1}^{l-1}
|f(\gamma_k(t_r))-f(\gamma_k(t_{r+1}))|\,.
\end{equation}
It follows from~(\ref{eq6A_1}) that, there is $1\leqslant
l=l(k)\leqslant m-1$ such that
\begin{equation}\label{eq7}
|f(\gamma_k(t_{l(k)}))-f(\gamma_k(t_{l(k)+1})|\geqslant
\delta/l\geqslant \delta/(m-1)\,.
\end{equation}
Observe that, the set $G_k:=|f(\gamma_k)|_{(t_{l(k)}, t_{l(k)+1})}|$
belongs to $D^{\,\prime}\setminus E_*,$ because it does not contain
any point in $E_*.$ Since $D^{\,\prime}\setminus E_*$ consists only
of finite components, there exists at least one a component of
$D^{\,\prime}\setminus E_*,$ containing infinitely many components
of $G_k.$ Without loss of generality, going to a subsequence, if
need, we may assume that all $G_k$ belong to one component $K$ of
$D^{\,\prime}\setminus E_*.$

\medskip
Due to the compactness of $\overline{{\Bbb R}^n},$ we may assume
that the sequence $w_k:=f(\gamma_k(t_{l(k)})),$ $k=1,2,\ldots, $
converges to some a point $w_0\in \overline{D^{\,\prime}}.$ Let us
to show that $w_0\in C(f, \partial D)\subset E_*.$ Indeed, there are
two cases: either $w_k=f(\gamma_k(t_{l(k)}))\in E_*$ for infinitely
many $k,$ or $w_k\not\in E_*$ for infinitely many $k\in {\Bbb N}.$
In the first case, the inclusion $w_0\in E_*$ is obvious, because
$E_*$ is closed. In the second case, obviously, $w_k=f(x_k),$ but
this sequence converges to $y\in C(f,
\partial D)\subset E_*$ by the assumption.

By the assumption, each component of the set $D^{\,\prime}\setminus
E_*$ has a strongly accessible boundary with respect to
$\alpha$-modulus. In this case, for any neighborhood $U$ of the
point $w_0\in
\partial K$ there is a compact set $C_0^{\,\prime}\subset
K,$ a neighborhood $V$ of the point $w_0,$ $V\subset U,$ and a
number $P>0$ such that
\begin{equation}\label{eq1}
M_{\alpha}(\Gamma(C_0^{\,\prime}, F, K))\geqslant P
>0
\end{equation}
for any continua $F,$ intersecting $\partial U$ and $\partial V.$
Choose a neighborhood $U$ of $w_0$ with $d(U)< \delta/2(m-1),$ where
$\delta$ is from~(\ref{eq1B}). Let $C_0^{\,\prime}$ and $V$ be a
compact set and a neighborhood corresponding to $w_0.$

Observe that, $G_k$ contains some continuum $\widetilde{G}_k$ in $K$
intersecting $\partial U$ and $\partial V$ for sufficiently large
$k.$ Indeed, by the construction of $G_k,$ there is a sequence of
points $a_{s, k}:=f(\gamma_k(p_s))\rightarrow
w_k:=f(\gamma_k(t_{l(k)}))$ as $s\rightarrow \infty$ and $b_{s,
k}=f(\gamma_k(q_s))\rightarrow f(\gamma_k(t_{l(k)+1}))$ as
$s\rightarrow \infty,$ where $t_{l(k)})<
p_s<q_s<\gamma_k(t_{l(k)+1}))$ and $a_{s, k}, b_{s, k}\in K.$
By the triangle inequality and by~(\ref{eq7})
\begin{gather*}
|a_{s, k}-b_{s, k}|\geqslant |b_{s, k}-w_k|-|w_k -a_{s, k}|\nonumber \\
\label{eq11}
\geqslant|f(\gamma_k(t_{l(k)+1}))-w_k|-|f(\gamma_k(t_{l(k)+1}))-b_{s,
k}|-|w_k-a_{s, k}|\\
\geqslant\delta/(m-1)-|f(\gamma_k(t_{l(k)+1})-b_{s, k})|-|w_k-a_{s,
k}|\,.\nonumber
\end{gather*}
Since $a_{s, k}:=f(\gamma_k(p_s))\rightarrow
w_k:=f(\gamma_k(t_{l(k)}))$ as $s\rightarrow \infty$ and $b_{s,
k}=f(\gamma_k(q_s))\rightarrow f(\gamma_k(t_{l(k)+1}))$ as
$s\rightarrow \infty,$ it follows from the latter inequality that
there is $s=s(k)\in {\Bbb N}$ such that
\begin{equation}\label{eq12}
|a_{s(k), k}- b_{s(k), k}|\geqslant\delta/2(m-1)\,.
\end{equation}
Since $V$ is open, there is some neighborhood $U_k$ of $w_k$ such
$U_k\subset V.$ Since $a_{s, k}\rightarrow
w_k:=f(\gamma_k(t_{l(k)}))$ as $s\rightarrow \infty,$ we may assume
that $a_{s(k), k}\in V.$
Now, we set
$$\widetilde{G}_k:=f(\gamma_k)|_{[p_{s(k)}, q_{s(k)}]}\,.$$
In other words, $\widetilde{G}_k$ is a part of the path
$f(\gamma_k)$ between points $a_{s(k), k}$ and $b_{s(k), k}.$ Let us
to show that $\widetilde{G}_k$ intersects $\partial U$ and $\partial
V.$ Indeed, by the mentioned above, $a_{s(k), k}\subset V,$ so that
$V\cap \widetilde{G}_k\ne\varnothing.$ In particular, $U\cap
\widetilde{G}_k\ne\varnothing,$ because $V\subset U.$ On the other
hand, by~(\ref{eq12}) $d(\widetilde{G}_k)\geqslant\delta/2(m-1),$
however, $d(U)< \delta/2(m-1)$ by the choice of $U.$ In particular,
$d(V)<\delta/2(m-1).$ It follows from this that
$$(\overline{{\Bbb R}^n}\setminus U)\cap
\widetilde{G}_k\ne\varnothing\,, \qquad(\overline{{\Bbb
R}^n}\setminus V)\cap \widetilde{G}_k\ne\varnothing\,.$$
Now, by \cite[Theorem~1.I.5, \S46]{Ku}
$$\partial U\cap
\widetilde{G}_k\ne\varnothing\,, \partial V\cap
\widetilde{G}_k\ne\varnothing\,,$$
as required.
Now, by~(\ref{eq1}),
\begin{equation}\label{eq1C}
M_{\alpha}(\Gamma(\widetilde{G}_k, C_0^{\,\prime}, K))\geqslant P
>0\,, \qquad k=1,2,\ldots\,.
\end{equation}
Let us to show that, the relation~(\ref{eq1C}) contradicts the
definition of $f$ and condition~(\ref{eq6}). Indeed, we denote by
$\Gamma_k$ the family of all half-open paths $\beta_k:[a,
b)\rightarrow \overline{{\Bbb R}^n}$ such that $\beta_k(a)\in
|f(\gamma_k)|,$ $\beta_k(t)\in K$ for all $t\in [a, b)$ and,
moreover, $\lim\limits_{t\rightarrow b-0}\beta_k(t):=B_k\in
C_0^{\,\prime}.$ Obviously, by~(\ref{eq1C})
\begin{equation}\label{eq4A}
M_{\alpha}(\Gamma_k)=M_{\alpha}(\Gamma(\widetilde{G}_k,
C^{\,\prime}_0, K))\geqslant P
>0\,, \qquad k=1,2,\ldots\,.
\end{equation}
Since by the proving above $D\setminus E$ consists of finite number
of components, we may assume that all paths
$\nabla_k:=\gamma_k|_{[p_{s(k)}, q_{s(k)}]}$ belong to (some) one
component $D_1$ of $D\setminus E.$ By the proving above, $f$ is
closed in $D_1$ and $f(D_1)=K.$ Consider the family
$\Gamma_k^{\,\prime}$ of maximal $f$-liftings $\alpha_k:[a,
c)\rightarrow D$ of the family $\Gamma_k$ starting at $|\nabla_k|;$
such a family exists by Proposition~\ref{pr3}.

Note that, $C(f, \partial D_1)\subset C(f, \partial D) \cup C(f,
E)\subset E_*.$ Now, observe that, the situation when
$\alpha_k\rightarrow
\partial D_1$ as $k\rightarrow\infty$ is impossible because $f$ is
closed in $D_1$ and, consequently, $f$ is boundary preserving (see
\cite[Theorem~3.3]{Vu}). Therefore, by Proposition~\ref{pr3}
$\alpha_k\rightarrow x_1\in D$ as $t\rightarrow c-0,$ and $c=b$ and
$f(\alpha_k(b))=f(x_1).$ In other words, the $f$-lifting $\alpha_k$
is complete, i.e., $\alpha_k:[a, b]\rightarrow D_1.$ Besides that,
it follows from that $\alpha_k(b)\in f^{\,-1}(C^{\,\prime}_0)\cap
D_1.$ Since $f(D_1)=K,$ we obtain that
$f(f^{\,-1}(C^{\,\prime}_0)\cap D_1)=C^{\,\prime}_0.$ In addition,
$f^{\,-1}(C^{\,\prime}_0)\cap D_1$ is a compactum in $D_1,$ see e.g.
\cite[Theorem~3.3]{Vu}. Let $\varepsilon_1:={\rm
dist}\,((f^{\,-1}(C^{\,\prime}_0)\cap D_1), x_0).$ We may consider
that $C_2:=f^{\,-1}(C^{\,\prime}_0)\cap D_1\subset D_1\setminus
d_1.$ By the construction, $|\nabla_k|\subset D_1\cap d_k.$

\medskip
Applying now Lemmas~\ref{thOS4.1} and \ref{lem4A}, we obtain for
$\alpha:=\frac{p}{p-n+1}$ and sufficiently large $k\in {\Bbb N}$
that
\begin{gather}
M_{\alpha}(f(\Gamma_k^{\,\prime}))=M_{\alpha}(\Gamma(\widetilde{G}_k,
C^{\,\prime}_0,
K))\nonumber \\
\label{eq15} \leqslant (N(f,
D_1))^{\frac{n-1}{p-n+1}}\int\limits_{A(x_0, r_k,
\varepsilon_1)}Q(x)\eta^{\,\alpha}(|x-x_0|)\,dm(x)
\end{gather}
for any nonnegative Lebesgue measurable function $\eta$ satisfying
the relation~(\ref{eq6B}). Now, we set
$$\widetilde{Q}(x)=\begin{cases}Q(x), & Q(x)\geqslant 1\\
 1, & Q(x)<1\end{cases}\,.$$
Besides that, we set
\begin{equation}\label{eq9A} I_k=I(x_0, r_k,
\varepsilon_1)=\int\limits_{r_k}^{\varepsilon_1}\
\frac{dr}{r^{\frac{n-1}{\alpha-1}}\widetilde{q}_{x_0}^{\frac{1}{\alpha-1}}(r)}\,,
\end{equation}
where
$$\widetilde{q}_{x_0}(r)=\frac{1}{\omega_{n-1}r^{n-1}} \int\limits_{S(x_0,
r)}\widetilde{Q}(x)\,d\mathcal{H}^{n-1}\,.$$
By the assumption,
\begin{equation}\label{eq2_2}
I_k=\int\limits_{r_k}^{\varepsilon_1}\
\frac{dr}{r^{\frac{n-1}{\alpha-1}}\widetilde{q}_{x_0}^{\frac{1}{\alpha-1}}(r)}\rightarrow\infty
\end{equation}
as $k\rightarrow\infty.$ Since $\widetilde{q}_{x_0}(r)\geqslant 1$
for a.e. $r$ and by~(\ref{eq2_2}), $0<I_k<\infty$ for sufficiently
large $k\in {\Bbb N}.$ We set
$$\psi_k(t)= \left \{\begin{array}{rr}
1/[t^{\frac{n-1}{\alpha-1}}\widetilde{q}_{x_0}^{\frac{1}{\alpha-1}}(t)],
& t\in (r_k, \varepsilon_1)\ ,
\\ 0,  &  t\notin (r_k, \varepsilon_1)\ .
\end{array} \right. $$
Let $\eta_k(t)=\psi_k(t)/I_k$ for $t\in (r_k, \varepsilon_1)$ and
$\eta_k(t)=0$ otherwise. Now, $\eta_k$ satisfies~(\ref{eq6B}). In
this case, by~(\ref{eq15}) and by Fubini's theorem,
\begin{equation}\label{eq11*}
M_{\alpha}(f(\Gamma^{\,\prime}_k))=M_{\alpha}(\Gamma(\widetilde{G}_k,
C^{\,\prime}_0, K))=\frac{\omega_{n-1}}{I^{\alpha-1}_k}\rightarrow 0
\end{equation}
as $k\rightarrow\infty.$ The relations~(\ref{eq11*})
and~(\ref{eq1C}) contradict each other. Theorem is proved, excluding
the equality $f(\overline{D}_P)=\overline{D^{\,\prime}}.$ The proof
of the latter repeats the last part of the proof of Theorem~5.5 in
\cite{Sev$_4$}.~$\Box$

\medskip
The following lemma is proved in \cite[Lemma~2.1]{Sev$_5$}.

\medskip
\begin{lemma}\label{lem1}
{\it\, Let $1\leqslant p\leqslant n,$ and let $\Phi:[0,
\infty]\rightarrow [0, \infty] $ be a strictly increasing convex
function such that the relation
$$
\frac{d\tau}{\tau\left[\Phi^{-1}(\tau)\right]^{\frac{1}{p-1}}}=
\infty
$$
holds for some $\delta_0>\tau_0:=\Phi(0).$ Let $\frak{Q}$ be a
family of functions $Q:{\Bbb R}^n\rightarrow [0, \infty]$ such that
$$
\int\limits_D\Phi(Q(x))\frac{dm(x)}{\left(1+|x|^2\right)^n}\
\leqslant M_0<\infty
$$
for some $0<M_0<\infty.$ Now, for any $0<r_0<1$ and for every
$\sigma>0$ there exists $0<r_*=r_*(\sigma, r_0, \Phi)<r_0$ such that
$$
\int\limits_{\varepsilon}^{r_0}\frac{dt}{t^{\frac{n-1}{p-1}}q^{\frac{1}{p-1}}_{x_0}(t)}\geqslant
\sigma\,,\qquad \varepsilon\in (0, r_*)\,,
$$
for any $Q\in \frak{Q}.$ }
\end{lemma}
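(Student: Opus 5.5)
We follow the scheme of \cite[Lemma~2.1]{Sev$_5$}, cf.\ \cite[Lemma~5.4]{Sev$_4$}. (We read the displayed divergence condition as $\int_{\delta_0}^{\infty}\frac{d\tau}{\tau[\Phi^{-1}(\tau)]^{1/(p-1)}}=\infty$, the lower limit having been dropped in the statement.) The key feature to exploit is that the lower bound for $\int_{\varepsilon}^{r_0}$ must be uniform over the class $\frak{Q}$. Hence the only information we may use about $Q\in\frak{Q}$ is the integral bound $M_0$ together with the divergence of the $\Phi$-integral. We first reduce to the case $Q\geqslant 1$ by passing to $\widetilde{Q}$. By Remark~\ref{rem1}, $\widetilde{Q}$ obeys the same type of bound $M_0'$, and the integrand for $Q$ dominates that for $\widetilde{Q}$. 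We also normalize $x_0$ and use that on $B(x_0,r_0)$ the weight $(1+|x|^2)^{-n}$ is bounded below by a constant $c>0$. This constant depends on $x_0$; if uniformity in $x_0$ is required, we restrict to bounded $D$, where the weight is bounded below uniformly. Consequently $\int_{B(x_0,r_0)}\Phi(Q)\,dm\leqslant M_0/c$.

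Step 1: change of variables. Set $t=r_0e^{-s}$. This gives
$$\int_{\varepsilon}^{r_0}\frac{dt}{t^{\frac{n-1}{p-1}}q^{\frac1{p-1}}_{x_0}(t)}\geqslant r_0^{\frac{p-n}{p-1}}\int_0^{\log(r_0/\varepsilon)}\frac{e^{s\frac{n-p}{p-1}}\,ds}{q^{\frac1{p-1}}_{x_0}(r_0e^{-s})}.$$
Since $p\leqslant n$, the factor $e^{s(n-p)/(p-1)}$ is at least $1$ and can be dropped. It therefore suffices to bound from below $\int_0^{T}q^{-1/(p-1)}(r_0e^{-s})\,ds$ with $T=\log(r_0/\varepsilon)$, uniformly in $Q$.

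Step 2: Jensen. Since $\Phi$ is convex and increasing, Jensen's inequality on spheres gives $\Phi(q_{x_0}(t))\leqslant \frac{1}{\omega_{n-1}t^{n-1}}\int_{S(x_0,t)}\Phi(Q)\,d\mathcal H^{n-1}$. Integrating in polar coordinates yields
$$\int_0^{r_0}\Phi(q_{x_0}(t))\,\omega_{n-1}t^{n-1}\,dt\leqslant M_0/c.$$
In the variable $s$ this becomes a weighted bound $\int_0^\infty\Phi(q(r_0e^{-s}))\,e^{-ns}\,ds\leqslant C$. We then define the measurable function $\xi(s):=\Phi(q(r_0e^{-s}))$, so that $q=\Phi^{-1}(\xi)$.

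Step 3: this is the main obstacle, namely extracting divergence from the weighted integral bound. Following Ryazanov--Sevost'yanov type arguments (cf.\ \cite{Sev$_5$}), we split $[0,T]$ into two sets. The first is $\{s:\xi(s)\leqslant \delta_0e^{ns}\}$. On it, $q^{-1/(p-1)}\geqslant[\Phi^{-1}(\delta_0e^{ns})]^{-1/(p-1)}$. The substitution $\tau=\delta_0e^{ns}$ turns the corresponding integral into $\frac1n\int_{\delta_0}^{\delta_0e^{nT}}\frac{d\tau}{\tau[\Phi^{-1}(\tau)]^{1/(p-1)}}$, which tends to $\infty$ by hypothesis. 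The second set is where $\xi(s)>\delta_0e^{ns}$. By the weighted bound its measure is at most $C/\delta_0$. On this set we lose control of the integrand, so we lower-bound the first integral only over the complement. To do so we compare it with the full integral: since $\Phi^{-1}$ is increasing, the integrand $[\Phi^{-1}(\delta_0e^{ns})]^{-1/(p-1)}$ is decreasing in $s$. Removing a set of measure at most $C/\delta_0$ therefore decreases the integral by at most the integral over an initial interval of length $C/\delta_0$, which is a fixed finite quantity. Hence the lower bound tends to $\infty$ as $T\to\infty$ uniformly in $Q\in\frak{Q}$. We finally choose $r_*$ so that $\log(r_0/r_*)$ makes the bound exceed $\sigma\,r_0^{(n-p)/(p-1)}$. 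The resulting $r_*$ depends only on $\sigma,r_0,\Phi$ (and on $M_0$ and the fixed geometry).
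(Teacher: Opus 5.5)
The paper gives no proof of this lemma; it only cites \cite[Lemma~2.1]{Sev$_5$}. Your argument is correct and self-contained, and it takes a different route from the usual one in this literature.

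The usual route quotes a Ryazanov--Srebro--Yakubov type estimate, roughly
$$\int_{\varepsilon}^{r_0}\frac{dt}{t\,q_{x_0}^{\frac{1}{p-1}}(t)}\geqslant\frac1n\int_{eM}^{M r_0^n/\varepsilon^n}\frac{d\tau}{\tau\left[\Phi^{-1}(\tau)\right]^{\frac{1}{p-1}}}\,,$$
where $M$ is a normalized mean of $\Phi(Q)$ over $B(x_0,r_0)$, uniformly bounded thanks to the weight $(1+|x|^2)^{-n}$. It then uses $t^{\frac{n-1}{p-1}}\leqslant t$ for $t<r_0<1$ and $p\leqslant n$, which is where the hypothesis $r_0<1$ enters.

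You prove an estimate of the same shape directly:
\begin{itemize}
\item in the variable $s=\log(r_0/t)$, Jensen and Fubini give $\int_0^\infty\xi(s)e^{-ns}\,ds\leqslant C$;
\item Chebyshev bounds the $ds$-measure of $\{\xi>\delta_0e^{ns}\}$ by $C/\delta_0$;
\item since $s\mapsto[\Phi^{-1}(\delta_0e^{ns})]^{-1/(p-1)}$ is nonincreasing, discarding that set costs at most the integral of this function over $[0,C/\delta_0]$, which is finite because $\delta_0>\Phi(0)$.
\end{itemize}
The outcome is $\frac1n\int_{\delta_0e^{nC/\delta_0}}^{\delta_0(r_0/\varepsilon)^n}$ in place of $\frac1n\int_{eM}^{M(r_0/\varepsilon)^n}$. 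Your route is elementary, and because you keep the factor $r_0^{(p-n)/(p-1)}$ it does not even need $r_0<1$. The standard route is shorter only because the key inequality is quoted. It also has a better lower limit, linear in $M$ rather than exponential in $C/\delta_0$; you could recover that by using the threshold $\lambda e^{ns}$ with $\lambda=\max\{\delta_0,C\}$.

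One point you should make explicit. The hypothesis controls $\Phi(Q)$ only on $D$, whereas $q_{x_0}$ averages $Q$ over whole spheres. In the paper the lemma is applied at $x_0=b\in\partial D$, so $B(x_0,r_0)\not\subset D$. Your inequality $\int_{B(x_0,r_0)}\Phi(Q)\,dm\leqslant M_0/c$ therefore needs a convention for $Q$ outside $D$, for example $Q\equiv 0$ there, or $\widetilde Q\equiv 1$. With such a convention the bound becomes $M_0/c+\Phi(1)\Omega_n r_0^n$ and nothing else in your argument changes.

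Two smaller remarks: the passage to $\widetilde Q$ is harmless but unnecessary, since a smaller $q$ only helps. Also, $p=1$ must be excluded, since the exponent $\frac{1}{p-1}$ is undefined there.
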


\medskip
{\it Proof of Theorem~\ref{th3_1}}. By Lemma~\ref{lem1} and
Remark~\ref{rem1} the conditions~(\ref{eq1D})--(\ref{eq2}) imply the
relation~(\ref{eq6}) for every $b\in \partial D.$ The rest follows
by Theorem~\ref{th3}.~$\Box$

\medskip
{\it Proof of Theorem~\ref{th1}} is similar to the proof of
Theorem~\ref{th3}, but uses Lemma~\ref{lem3} instead of
Lemma~\ref{thOS4.1}. In all other respects, the proof of this
theorem literally repeats the previous proof.~$\Box$

\medskip
{\it Proof of Theorem~\ref{th1_1}}. By Lemma~\ref{lem1} and
Remark~\ref{rem1} the conditions~(\ref{eq1D})--(\ref{eq2}) imply the
relation~(\ref{eq6}) for every $b\in \partial D.$ The rest follows
by Theorem~\ref{th1}.~$\Box$

\medskip {\bf Acknowledgement}

The work was supported by the National Research Foundation of
Ukraine (Project ``Analogues of Carath\'{e}odory and Koebe-Bloch
theorems for Orlycz-Sobolev classes'', Project number 2025.02/0010).

\medskip
\noindent{{\bf Zarina Kovba} \\
Zhytomyr Ivan Franko State University,  \\
40 Velyka Berdychivs'ka Str., 10 008  Zhytomyr, UKRAINE \\
e-mail: mazhydova@gmail.com

\medskip
{\bf \noindent Evgeny Sevost'yanov} \\
{\bf 1.} Zhytomyr Ivan Franko State University,  \\
40 Velyka Berdychivs'ka Str., 10 008  Zhytomyr, UKRAINE \\
{\bf 2.} Institute of Applied Mathematics and Mechanics\\
of NAS of Ukraine, \\
19 Henerala Batyuka Str., 84 116 Slov'yansk,  UKRAINE\\
esevostyanov2009@gmail.com


\begin{thebibliography}{99}

\bibitem[Ad]{Ad} {\sc Adamowicz, T.:} Prime ends in metric spaces and quasiconformal-type
mappings. - Analysis and Mathematical Physics~9:4, 2019, 1941-–1975.

\bibitem[Car]{Car} {\sc Caratheodory, C.:} \"Uber die Begrenzung
der einfachzusammenh\"angender Gebiete. - Math. Ann.~73, 1913,
323--370.

\bibitem[CL]{CL} {\sc Collingwood, E.F. and A.J.~Lohwator}: The Theory of Cluster
Sets. -- Cambridge Tracts in Math. and Math. Physics~\textbf{56},
Cambridge Univ. Press, Cambridge, 1966.

\bibitem[Cr]{Cr} {\sc Cristea, M.:} Open discrete mappings having local $ACL^n$
inverses. - Complex Variables and Elliptic Equations~55: 1--3, 2010,
61--90.

\bibitem[DS]{DS} {\sc Desyatka, V., E.~Sevost'yanov:}
On boundary-non-preserving mappings with Poletsky inequality. -
Canadian Mathematical Bulletin~68:3, 2025, 834--855.

\bibitem[DSH]{DSH} {\sc Desyatka, V., E.~Sevost'yanov,
A.~Halyts'ka:} On boundary extension of unclosed Orlycz-Sobolev
mappings, https://arxiv.org/abs/2504.18123 .

\bibitem[Hes]{Hes} {\sc Hesse, J.:}
A $p-$extremal length and $p-$capacity equality. - Ark. Mat.~13,
1975, 131--144.

\bibitem[IS]{IS} {\sc Ilyutko, D.P., E.A.~Sevost'yanov:} On prime ends on Riemannian
manifolds. - J. Math. Sci.~241:1, 2019. 47--63.

\bibitem[KPR]{KPR} {\sc Kovtonyuk, D.A., I.V.~Petkov and V.I.~Ryazanov:}
Prime ends in theory of mappings with finite distortion in the
plane. - Filomat J.~31:5, 2017, 1349–-1366.

\bibitem[KR]{KR} {\sc Kovtonyuk, D.A. and V.I.~Ryazanov:}
On the theory of prime ends for space mappings. - Ukr. Math.
J.~67:4, 2015, 528--541.

\bibitem[Ku]{Ku} {\sc Kuratowski, K.:} Topology, v.~2. -- Academic
Press, New York--London,  1968.

\bibitem[MRV]{MRV} {\sc Martio, O., S. Rickman, and J. V\"{a}is\"{a}l\"{a}:}
Topological and metric properties of quasiregular mappings. - Ann.
Acad. Sci. Fenn. Ser. A1. 488, 1971, 1--31.

\bibitem[MRSY]{MRSY} {\sc Martio, O., V. Ryazanov, U. Srebro, and E. Yakubov:}
Moduli in modern mapping theory. - Springer Science + Business
Media, LLC, New York, 2009.

\bibitem[Na]{Na} {\sc N\"{a}kki, R.:} Prime ends and quasiconformal
mappings. - J. Anal. Math.~35, 1979, 13--40.

\bibitem[PSS]{PSS} {\sc Petkov, I., R.~Salimov, M.~Stefanchuk,}
Nonlinear Beltrami equation: Lower estimates of Schwarz lemma's
type. - 67:3, 2024, 533–-543.

\bibitem[RV]{RV} {\sc Ryazanov, V., S.~Volkov:}
On the Boundary Behavior of Mappings in the Class $W^{1,1}_{loc}$ on
Riemann Surfaces. - Complex Analysis and Operator Theory~11, 2017,
1503--1520.

\bibitem[Sal]{Sal} {\sc Salimov, R.R.:} Estimation of the measure of the image of the ball. - Sib.
Math. J.~53:4, 2012, 739--747.

\bibitem[SalSt]{SalSt} {\sc Salimov, R.R., M.V.~Stefanchuk,}
Functional Asymptotics of Solutions of the Nonlinear
Cauchy–Riemann–Beltrami System. - Journal of Mathematical Sciences
(United States)~277:2, 2023, 311--328.

\bibitem[Sev$_1$]{Sev$_1$} {\sc Sevost'yanov, E.:}
On the local behavior of Open Discrete Mappings from the
Orlicz-Sobolev Classes. - Ukr. Math. J.~68:9, 2017, 1447--1465.

\bibitem[Sev$_2$]{Sev$_2$} {\sc Sevost'yanov, E.:}
On the boundary behavior of some classes of mappings. - J. Math.
Sci.~243:6, 2019, 934--948.

\bibitem[Sev$_3$]{Sev$_3$} {\sc Sevost'yanov, E.:}
The inverse Poletsky inequality in one class of mappings. - Journal
of Mathematical Sciences~264:4, 2022, 455--470.

\bibitem[Sev$_4$]{Sev$_4$} {\sc Sevost'yanov, E.A:}
Mappings with Direct and Inverse Poletsky Inequalities. Developments
in Mathematics (DEVM, volume 78). - Springer Nature Switzerland AG,
Cham, 2023.

\bibitem[Sev$_5$]{Sev$_5$} {\sc Sevost'yanov, E.:} On global behavior of mappings with integral
constraints. - Analysis and Mathematical Physics~12:3, 2022, Article
number~76.

\bibitem[Shl]{Shl} {\sc Shlyk, V.A.:} The equality between $p$-capacity and
$p$-modulus. - Siberian Mathematical Journal~34:6, 1993, 1196--1200.

\bibitem[Va]{Va} {\sc V\"{a}is\"{a}l\"{a} J.:} Lectures on
$n$-dimensional quasiconformal mappings. - Lecture Notes in Math.
229, Springer-Verlag, Berlin etc., 1971.

\bibitem[Vu]{Vu} {\sc Vuorinen, M.:} Exceptional sets and boundary behavior
of quasiregular mappings in $n$-space. - Ann. Acad. Sci. Fenn. Ser.
A 1. Math. Dissertationes~11, 1976, 1--44.

\bibitem[Zi$_1$]{Zi$_1$} {\sc Ziemer, W.P.:} Extremal length and conformal capacity. - Trans. Amer.
Math. Soc.~126:3, 1967, 460--473.

\bibitem[Zi$_2$]{Zi$_2$} {\sc Ziemer, W.P.:} Extremal length and $p$-capacity. -
Michigan Math. J.~16, 1969, 43--51.


\end{thebibliography}
\end{document}